\numberwithin{equation}{section}
\newtheorem{theorem}{Theorem}[section]
\newtheorem{definition}[theorem]{Definition}
\newtheorem{proposition}[theorem]{Proposition}
\newtheorem{lemma}[theorem]{Lemma}
\newtheorem{corollary}[theorem]{Corollary}
\newcommand{\rad}{{\text{\upshape rad}}}
\def\a{{\alpha}}
\newcommand{\R}{\mathbb{R}}
\newcommand{\nod}{\mathop{\mathrm{nod}}}
\newif\ifcomment \commentfalse
\def\commentON{\commenttrue}
\long\outer\def\BC#1\EC{\ifcomment \sloppy \par \# \ldots\dotfill
{\em #1} \dotfill \# \par \fi } \commentON
\newcommand{\remove}[1]{}
\def\sideremark#1{\ifvmode\leavevmode\fi\vadjust{\vbox to0pt{\vss
 \hbox to 0pt{\hskip\hsize\hskip1em
 \vbox{\hsize2.1cm\tiny\raggedright\pretolerance10000
  \noindent #1\hfill}\hss}\vbox to15pt{\vfil}\vss}}}%
\definecolor{cadmiumgreen}{rgb}{0.0, 0.42, 0.24}
\definecolor{darkgreen}{rgb}{0.0, 0.5, 0.2}
\definecolor{purple}{rgb}{0.5, 0.0, 0.5}
\title[H\'enon problem]{A note on nonradial nodal solutions to the H\'enon problem in the disc}
\thanks{This work was partially supported by Gruppo Nazionale per l'Analisi Matematica, la Probabilit\`a e le loro Applicazioni (GNAMPA) of the Istituto Nazionale di Alta Matematica (INdAM) and Fabbr\\
}
\author[F.~Gladiali, G.~Stegel]{Francesca Gladiali$^*$, Giovanni Stegel$^*$ }
\address{$*$ Dipartimento di Chimica e Farmacia, Universit\`a di Sassari, via Piandanna 4, 07100 Sassari, Italy\\
\texttt{fgladiali@uniss.it}\\
\texttt{stegel@uniss.it}}
\begin{document}
\maketitle

\begin{abstract}
In this paper we consider some nodal solutions of the H\'enon problem in the unit disc with Dirichlet boundary conditions and we show that they are quasiradial, that is to say  they are nonradial, they have two nodal regions and their nodal line does not touch the boundary of the disc.

\

{\em{MSC}}: 35A01, 35A15, 35B06, 35B07, 35B32, 35B40, 35J61, 35J20\\
{\em {Keywords}:} nodal solutions, non-radial solutions, bifurcation, Morse index, least energy, symmetry, nodal lines, nodal regions
\end{abstract}


\section{Introduction}\label{sec:intro}
This paper concerns with nodal solutions to the 
 H\'enon problem
\begin{equation}\label{H}
\left\{\begin{array}{ll}
-\Delta u = |x|^{\alpha}|u|^{p-1} u \qquad & \text{ in } B, \\
u= 0 & \text{ on } \partial B,
\end{array} \right.
\end{equation}
where $\a\geq 0$, $p>1$ and 
$B$ stands for the unit ball of the plane. Equation \eqref{H} has important applications in physics. It has been derived in \cite{H} in the study of a cluster of stars with a big collapsed object in the origin and it models also steady-state distributions in some diffusion processes, see \cite{ederson-filomena-2}.\\
One way to obtain solutions that change sign is to minimize the 
the Energy functional
\begin{equation}\label{eq:energy}
\mathcal E_p(u)=\frac 12 \int _B |\nabla u|^2-\frac 1{p+1}\int_B |x|^\a |u|^{p+1}\end{equation} 
constrained to the nodal Nehari manifold
\[\mathcal N_{\nod}:=\{v\in H^1_0(B): \text{ s.t. }v^+, \, v^- \neq 0, \ \mathcal E_p'(v)v^+=0, \ \mathcal E_p'(v)v^-=0\}\]
where $\mathcal E'$ denotes the Fr\'echet derivative of $\mathcal E$ 
and $s^+$ ($s^-$) stands for  the positive (negative) part of $s$. 
The nodal Nehari has been introduced in \cite{CCN}, see also \cite{BW}, to produce the so called least energy nodal solutions. In our setting,
since $H^1_0(B)$ is compactly embedded in $L^{p+1}(B)$ 
for every $p>1$, we can infer that
\[\min_{u\in \mathcal N_{\nod}}\mathcal E_p(u)\]
is attained at a nontrivial function $u_p$, which is a weak, but also classical, solution to \eqref{H}, has two nodal regions, which are the connected components of the set $\{x\in B \ : u(x)\neq 0\}$ and satisfies
\begin{equation}\label{eq:morse-due}
m(u_p)=2.\end{equation}
Here $m(u)$ is the Morse index of  a solution $u$ to \eqref{H}, namely 
the maximal dimension of a subspace $X\subseteq H^1_0(B)$ where the quadratic form 
	\begin{equation}\label{eq:Q-u}
	Q_u(\psi):=\int_B\left(|\nabla \psi|^2-p|x|^\a|u|^{p-1}\psi^2\right)\, dx\end{equation}
	is negative definite.\\
Let us explain how these last properties are obtained in \cite{BW}, since we will need to use them in the sequel. Since $u_p$ is a minimum on $\mathcal N_{\nod}$
\[\langle \mathcal E''_p(u_p)\psi,\psi\rangle=Q_{u_p}(\psi)\geq 0\]
for any $\psi$ on the tangent space to $\mathcal N_{\nod}$ at $u_p$ that we denote by $T_{u_p}$, where $\mathcal E''_p(u)$ is the second Fr\'echet derivative of $\mathcal E_p$ at $u$ and $\langle \ , \ \rangle$ is the pairing. Then the quadratic form $Q_{u_p}$ can be negative definite only on the orthogonal to $T_{u_p}$ and, since $\mathcal N_{\nod}$ has codimension $2$, one gets
\[m(u_p)\leq 2.\]
Moreover, $u_p\notin T_{u_p}$ and $u_p^+$ and $u_p^-$ satisfy $\int_B \left|\nabla u_p^\pm\right|^2 dx=\int _B |x|^\a\left|u_p^\pm\right|^{p+1}dx$, so that we have
\[Q_{u_p}(u_p^\pm)=\int_B \left|\nabla u_p^\pm\right|^2 dx-p\int _B |x|^\a\left|u_p^\pm\right|^{p+1}dx=(1-p)\int_B \left|\nabla u_p^\pm\right|^2 dx<0\]
since $u_p^\pm\neq 0$ and $p>1$, showing
\eqref{eq:morse-due}.\\
 Finally $u_p$ has $2$ nodal regions because 
 \begin{equation}\label{eq:regioni-morse}
 2\leq n(u_p)\leq m(u_p)=2\end{equation}
 if $n(u)$ denotes the number of nodal regions of $u$. The second inequality holds since if $\Omega_p$ is a nodal region of $u_p$ then, letting $z_p=u_p\chi_{\Omega_p}$, where $\chi_\Omega$ is the characteristic function of $\Omega$, then $z_p$ satisfies $\int_B  \left|\nabla z_p\right|^2 dx=\int _B |x|^\a\left|z_p\right|^{p+1}dx$ and as before $Q_{u_p}(z_p)=(1-p) \int_B  \left|\nabla z_p\right|^2 dx<0$. 
 \\
 Moreover, when $\a=0$, letting $\mathcal Z_u:=\{x\in B: u(x)=0\}$ the nodal set of a solution $u$, then
 \[\overline{\mathcal Z_{u_p}}\cap \partial B\neq \emptyset\]
by \cite[Theorem 1.2]{PW} or \cite{AP} and it is reasonable to conjecture that the same holds when $\a>0$.
The same minimization method to produce nodal solutions can be repeated in subspaces of $H^1_0(B)$ which are invariant by the action of some subgroup $\mathcal G$ of the orthogonal group $O(2)$, producing by the principle of symmetric criticality in \cite{Palais} solutions to \eqref{H} invariant by the action of $\mathcal G$.\\
In particular, letting $H^1_{0,\rad}$ the subspace given by radial functions (which are invariant by the action of $O(2)$), and $\mathcal N_{\nod}^{\rad} :=\mathcal N_{\nod}\cap H^1_{0,\rad}$ we can say that 
\[\min_{u\in \mathcal N_{\nod}^{\rad}}\mathcal E_p(u)\]
is attained at a nontrivial function $u_p^\rad$ which is a radial solution to \eqref{H}, has two nodal regions and satisfies
\begin{equation}\label{eq:morse-due-radial}
m_\rad(u_p^\rad)=2\end{equation}
if $m_\rad(u)$ denotes the Morse index in the space $H^1_{0,\rad}$. As in the previous case the last estimate can be deduced from the minimality of $\mathcal E_p(u_p^\rad)$ in $\mathcal N_{\nod}^{\rad}$, while no estimate can be deduced on the total Morse index $m(u_p^\rad)$ by this minimality in $H^1_{0,\rad}$. Moreover obviously $\overline{\mathcal Z_{u_p^\rad}}\cap \partial B=\emptyset.$
\\
Of course, in principle $u_p^\rad$ can coincide with the least energy nodal solution $u_p$ found already, but it is known by 
\cite{AG-19-part2} (Theorem 1.1, formula (1.8) with $N=2$)
that this is not the case because of the estimate
\[m(u_p^\rad)\geq 4+2\left[\frac \a2\right]\]
where $[\cdot]$ is the integer part, 
which contradicts \eqref{eq:morse-due} and shows that $u_p\neq u_p^\rad$ for every $p>1$ and every $\a\geq 0$. A similar estimate 
has been previously deduced, in a different way, in \cite{AP} for autonomous nonlinearities, namely for $\a=0$, see also \cite{DSP} for some similar estimates when $\a\neq 0$.
\\
However, as in the paper \cite{AGN=2}, for every $n\geq 1$ we can consider also the subgroups $\mathcal G_n$ of $O(2)$ generated by any rotation of angle $\frac {2\pi}n$ centered at the origin. Then, letting
\[H^1_{0,n}:=\{v\in H^1_0(B): v(x)=v(g(x)) \text{ for any }x\in B, \text{ for any }g\in \mathcal G_n\}\]
we can repeat the minimization of 
$\mathcal E_p$ on the constraint $\mathcal N_{\nod} ^n:=\mathcal N_{\nod}\cap  H^1_{0,n}$ obtaining that
\[\min_{u\in \mathcal N_{\nod} ^n}\mathcal E_p(u)\]
is attained at a nontrivial function $u_p^n\in H^1_{0,n}$ which solves \eqref{H}, changes sign and satisfies
\begin{equation}\label{eq:morse-due-n}
m_n(u_p^n)=2\end{equation}
if $m_n(u)$ denotes the Morse index in the space $H^1_{0,n}$. As previously observed \eqref{eq:morse-due-n} is due to the fact that $u_p^n$ minimizes $\mathcal E_p$ on $\mathcal N_{\nod} ^n$ and satisfies $\int_B\left|\nabla (u_p^n)^\pm\right|^2=\int_B\left|(u_p^n)^\pm\right|^{p+1}$. We will refer to these functions as nodal $n$-invariant  least energy solutions to \eqref{H}.\\
Anyway it is not clear if by minimizing $\mathcal E_p(u)$ on $\mathcal N_{\nod}^n$ we bring on new solutions, since $u_p^n$ can coincide either with $u_p$ or with $u_p^\rad$ or with $u_p^m$ for $n\neq m$ and, of course, for $n=1$ $u_p^1$ corresponds to $u_p$ since $\mathcal G_1$ is the trivial subgroup and $H^1_{0,1}=H^1_0(B)$. 
But, by \cite{BWW} we know that least energy solutions are foliated Schwarz symmetric, namely axially symmetric with respect to an axis passing through the origin and nonincreasing in the polar angle from this axis. In particular in \cite{PW}, for $p>2$, it is shown that they are strictly decreasing in the polar angle when nonradial.\\
This last result proves then that $u_p^n$ differs from $u_p$ for every $n>1$. 
Moreover, very recently, the question of the extension of the foliated Schwarz symmetry to the case of functions invariant by the action of $\mathcal G_n$ has been raised in \cite{Gladiali-19}. Here, denoting by 
$S _{\frac {2\pi}n}:=\{(x,y)\in B: x>0, y>0, 0<\frac yx< \frac {2\pi}n\}$ and $S_{\frac \pi n}:=\{(x,y)\in B: x>0, y>0, 0<\frac yx< \frac {\pi}n\}$ and $\mathcal B:=\{ (x,y)\in B: x>0, y>0, \frac yx= \frac {\pi}n\}$ the bisector of $S_{\frac {2\pi}n}$, it has been proved that:
\begin{theorem}[\cite{Gladiali-19}]\label{teo1}
Let $u_p^n\in H^1_{0,n}$ be a solution to \eqref{H} with $p\geq 2$ such that 
\[m_n(u_p^n)\leq 2.\]
Then, either $u_p^n$ is radial or $u_p^n$, up to a rotation, is symmetric with respect to $\mathcal B$ in the sector $S _{\frac {2\pi}n}$  and is strictly decreasing in the angular variable in the semi-sector $S_{\frac \pi n}$.
\end{theorem}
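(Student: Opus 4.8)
The plan is to combine a reflection (polarization) argument in the spirit of Pacella--Weth with the rotational invariance of the weight $|x|^\a$, using the bound $m_n(u_p^n)\le 2$ to force a symmetry axis. Write $u=u_p^n$ and $V(x)=p|x|^\a|u|^{p-1}$, so that the linearized operator is $L_u=-\D-V$ and $Q_u(\psi)=\int_B(|\na\psi|^2-V\psi^2)$. First I would record the two elementary invariances. Since $|x|^\a$ is radial, for any line $\ell$ through the origin the reflected function $u\circ\s_\ell$ again solves \eqref{H}; hence $w_\ell:=u-u\circ\s_\ell$ is antisymmetric across $\ell$ and solves a linear equation $-\D w_\ell=c_\ell(x)\,w_\ell$, where by the mean value theorem $c_\ell=p|x|^\a|\xi|^{p-1}$ for some $\xi$ between $u$ and $u\circ\s_\ell$. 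Likewise, differentiating the rotated solutions in the rotation angle shows that the angular derivative $w:=\partial_\theta u$ lies in $\ker L_u$ and, since $\partial_\theta$ commutes with $\mathcal G_n$, that $w\in H^1_{0,n}$ with $\int_0^{2\pi/n}w(r,\cdot)=0$ for every $r$. If $w\equiv0$ then $u$ is radial and the first alternative holds, so from now on I assume $w\not\equiv0$.

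The core step is to turn $m_n\le2$ into a statement about sign changes of $w$. Here the hypothesis $p\ge2$ is used: then $s\mapsto|s|^{p-1}s$ is convex, which yields the comparison $c_\ell(x)\le V(x)$ on the half-disc where $w_\ell\ge0$. Consequently, whenever $w_\ell$ changes sign inside a half-disc bounded by $\ell$, the corresponding one-signed piece, extended antisymmetrically, is an admissible test function on which $Q_u$ is \emph{strictly} negative; moreover the solution itself satisfies $Q_u(u)=(1-p)\int_B|\na u|^2<0$ as recalled in the Introduction. The plan is then to show that, for lines $\ell$ compatible with the $\mathcal G_n$-symmetry, at most one independent angular sign change can occur in a fundamental sector: two of them would produce, together with the direction spanned by $u$, three linearly independent functions in $H^1_{0,n}$ on which $Q_u$ is negative definite, contradicting $m_n\le2$. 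This forces the existence of a line $\ell^*$ with $w_{\ell^*}\equiv0$, i.e. a symmetry axis, and forces $w=\partial_\theta u$ to keep a constant sign on each side of $\ell^*$ within $S_{\frac{2\pi}n}$. I expect the main obstacle to be precisely this counting: the test functions coming from $u$ and from the antisymmetric pieces are not a priori $B_u$-orthogonal, so showing that they span a negative \emph{definite} subspace (rather than merely negative semidefinite, which is all the identity $Q_u(w\chi_\Om)=0$ gives for kernel elements) requires the careful, $\mathcal G_n$-adapted bookkeeping that is the heart of the argument.

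Finally I would assemble the symmetry statement. Choosing the rotation so that $\ell^*$ becomes the bisector $\mathcal B$ of $S_{\frac{2\pi}n}$, the identity $w_{\ell^*}\equiv0$ reads $u(r,\theta)=u(r,\tfrac{2\pi}n-\theta)$, which is the claimed symmetry in $S_{\frac{2\pi}n}$; invariance under $\mathcal G_n$ then propagates it to all of $B$. The constant sign of $\partial_\theta u$ on $S_{\frac\pi n}$ gives monotonicity in the angular variable, and strictness follows from the strong maximum principle and the Hopf lemma applied to $w$, together with unique continuation to exclude that $w$ vanishes on an open angular subsector; equivalently, since $w_\ell$ solves a linear elliptic equation and is antisymmetric, it cannot vanish on a half-disc without being identically zero, so the one-signedness is strict in the interior. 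This yields that $u$ is strictly decreasing in $\theta$ on $S_{\frac\pi n}$, completing the second alternative.
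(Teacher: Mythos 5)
First, a point of reference: the paper does not prove Theorem \ref{teo1} at all --- it is quoted verbatim from \cite{Gladiali-19} and used as a black box. So there is no internal proof to compare against; your proposal has to be judged as a self-contained argument, and as such it is an accurate sketch of the \emph{strategy} of \cite{PW} and \cite{Gladiali-19} (reflections $\sigma_\ell$ across lines through the origin, the antisymmetric differences $w_\ell=u-u\circ\sigma_\ell$, the angular derivative in the kernel of the linearized operator, and a Morse-index count in $H^1_{0,n}$), but it is not a proof. The step you yourself flag as ``the heart of the argument'' --- showing that two angular sign changes produce a $3$-dimensional subspace of $H^1_{0,n}$ on which $Q_u$ is negative \emph{definite}, and deducing from its absence both a symmetry line $\ell^*$ and the one-signedness of $\partial_\theta u$ on $S_{\frac{\pi}{n}}$ --- is precisely the content of the cited theorem and is left entirely open. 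In particular, the passage from ``$w_{\ell^*}\equiv 0$ for some line'' to ``$\partial_\theta u$ has constant sign on each side of $\ell^*$'' is asserted, not derived; in the Pacella--Weth scheme this requires a separate continuity/connectedness argument over the circle of directions $\ell$, plus the maximum principle applied to $w_\ell$ for the extremal lines, none of which appears here.

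There is also a concrete error in the one quantitative ingredient you do state. The inequality ``$c_\ell(x)\le V(x)$ on the half-disc where $w_\ell\ge 0$'' is false in general: with $f(s)=|s|^{p-1}s$ one has $c_\ell=\frac{f(u)-f(u\circ\sigma_\ell)}{u-u\circ\sigma_\ell}$, and convexity of $f$ only holds on $[0,\infty)$ (it is concave on $(-\infty,0]$), so the comparison with $f'(u)=V$ reverses where $u$ is negative --- which is unavoidable here since $u_p^n$ is nodal. The hypothesis $p\ge 2$ enters through the convexity of $f'(s)=p|s|^{p-1}$, and what it yields (via Hermite--Hadamard, $c_\ell=\frac{1}{u-u\circ\sigma_\ell}\int f'$) is the two-sided estimate $c_\ell\le \tfrac12\left(V+V\circ\sigma_\ell\right)$; it is this symmetrized bound, not $c_\ell\le V$, that lets one dominate the quadratic form of the equation satisfied by $w_\ell$ by a form built from $Q_u$ and its reflection, and hence convert sign changes of $w_\ell$ into negative directions for $Q_u$. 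Since both the corrected inequality and the $\mathcal G_n$-adapted counting of negative directions are missing, the proposal should be regarded as an outline of the approach of \cite{Gladiali-19} rather than a proof; for the purposes of this paper the correct move is simply to cite that reference, as the authors do.
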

\noindent As an application of Theorem \ref{teo1} we get 
that nodal least energy solutions $u_p^n$ either are 
radial or, up to a rotation, are strictly decreasing in the polar angle in  $S_{\frac \pi n}$, showing that $u_p^n\neq u_p^m$ for $n\neq m$ when they are nonradial and $p\geq 2$. 
In \cite{Gladiali-19} and also in \cite{PW} the assumption $p\ge 2$ arises from a convexity request for the nonlinear term and cannot be removed.\\
There remains the possibility that the nodal solutions constructed in the spaces $H^1_{0,n}$ are radial when $n\geq 2$ and we want to know when $u_p^n\neq u_p^\rad$. This is a very difficult problem and it is not possible to give an answer to this question in such a general formulation. Indeed this issues strongly depends on the values of the parameters that describe the problem, namely on $\a$, $n$  and on $p$ so that changing one of this parameter makes the answer change.\\
Nevertheless a positive answer can be given at least for some values of $n$ when the exponent $p$ is large and indeed in  \cite{AGN=2} the following result has been showed:
\begin{theorem}[\cite{AGN=2}]\label{teo2}
	Let $\a\geq 0$ be fixed. There exists an exponent $p^*=p^*(\a)$ such that problem \eqref{H} admits at least $\lceil \frac {2+\a}2 \kappa-1\rceil $ distinct nodal nonradial solutions for every $p >p^*(\a)$.
\end{theorem}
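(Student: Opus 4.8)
The plan is to realize the required solutions as the $n$-invariant least energy nodal solutions $u_p^n$ already introduced, and to read off both their nonradiality and their mutual distinctness from a Morse-index comparison with the radial solution $u_p^\rad$. The starting point is the angular decomposition of the linearized operator $-\Delta-p|x|^\a|u_p^\rad|^{p-1}$ about $u_p^\rad$: separating variables as $\psi(r,\theta)=\phi(r)(a\cos k\theta+b\sin k\theta)$ reduces the quadratic form \eqref{eq:Q-u} to the family of radial Sturm--Liouville operators
\[
L_k\phi=-\phi''-\tfrac1r\phi'+\tfrac{k^2}{r^2}\phi-p\,r^\a|u_p^\rad|^{p-1}\phi,\qquad k\ge0 ,
\]
and, writing $\beta_k=\beta_k(p)$ for the number of negative Dirichlet eigenvalues of $L_k$ on $(0,1)$, one gets $m(u_p^\rad)=\beta_0+2\sum_{k\ge1}\beta_k$ and $m_n(u_p^\rad)=\beta_0+2\sum_{j\ge1}\beta_{jn}$. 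Since $\beta_0=m_\rad(u_p^\rad)=2$ by \eqref{eq:morse-due-radial} and the Hardy potential $k^2/r^2$ is increasing in $k$, the integers $\beta_k$ are nonincreasing, so there is a threshold $K_p:=\max\{k\ge1:\beta_k\ge1\}$ such that $\beta_k\ge1$ precisely when $1\le k\le K_p$.

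Next I would establish nonradiality. If $u_p^n$ were radial it would belong to $\mathcal N_{\nod}^\rad$ and, minimizing $\mathcal E_p$ over the larger constraint $\mathcal N_{\nod}^n$, would be a least energy radial nodal solution, hence coincide with $u_p^\rad$ (using uniqueness, up to sign, of the radial minimizer). Then $2=m_n(u_p^n)=m_n(u_p^\rad)=2+2\sum_{j\ge1}\beta_{jn}$ would force $\beta_{jn}=0$ for all $j\ge1$, i.e.\ $n>K_p$. Contrapositively, for every $n$ with $1\le n\le K_p$ the solution $u_p^n$ is nonradial. Distinctness is then supplied by Theorem~\ref{teo1}: for $p\ge2$ each nonradial $u_p^n$ is, up to a rotation, symmetric with respect to $\mathcal B$ and strictly decreasing in the angular variable on $S_{\frac\pi n}$; strict angular monotonicity on a sector of opening $\frac\pi n$ forbids invariance under any finer rotation, so the minimal angular period of $u_p^n$ is exactly $\frac{2\pi}n$ and $u_p^n\ne u_p^m$ whenever $n\ne m$. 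This already produces $K_p$ distinct nodal nonradial solutions.

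It remains to bound $K_p$ below, and this is the analytic core of the argument: one must count the unstable modes $\{k\ge1:\beta_k\ge1\}$ asymptotically as $p\to\infty$. Here I would exploit the concentration of the radial nodal solution: after the change of variable adapted to the weight, which rescales the angular frequency by the effective factor $\frac{2+\a}2$, together with the Emden--Fowler rescaling of each nodal bump, the weighted potentials $p\,r^\a|u_p^\rad|^{p-1}$ converge to the limiting bubble profiles and the eigenvalue counts $\beta_k(p)$ stabilize. In the limit the instability of mode $k$ is governed by the comparison $k<\frac{2+\a}2\kappa$, where $\kappa$ is the relevant first eigenvalue/zero of the limiting radial problem; since the number of integers $k\ge1$ with $k<\frac{2+\a}2\kappa$ equals $\lceil\frac{2+\a}2\kappa-1\rceil$, one obtains $K_p\ge\lceil\frac{2+\a}2\kappa-1\rceil$ for all $p>p^*(\a)$, which is the claimed multiplicity.

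The genuinely delicate step is this last asymptotic analysis. The operators $L_k$ are singular at the origin --- the weight $r^\a$ degenerates and the limiting potential concentrates --- so passing to the limit $p\to\infty$, identifying the threshold $\kappa$, and obtaining uniform control of the eigenvalues $\beta_k(p)$ require a careful singular Sturm--Liouville (Bessel/hypergeometric) analysis, as well as the concentration and uniqueness properties of $u_p^\rad$ on which the comparison rests. By contrast, the symmetric minimization yielding $u_p^n$ with $m_n(u_p^n)=2$, the mode decomposition of the Morse index, and the separation of the $u_p^n$ via Theorem~\ref{teo1} are the softer ingredients.
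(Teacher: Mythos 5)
This statement is not proved in the present paper at all: it is quoted verbatim from \cite{AGN=2}, and the text here only uses it as an input. Your reconstruction nevertheless follows the strategy actually used in that reference (and in \cite{GI} for $\a=0$): decompose the quadratic form $Q_{u_p^\rad}$ into angular Fourier modes, observe that the negative-eigenvalue counts $\beta_k$ are nonincreasing in $k$, conclude that $u_p^n$ radial would force $m_n(u_p^\rad)=2$ and hence $\beta_n=0$, and then separate the resulting nonradial $u_p^n$ from one another. Two remarks. First, the step you correctly flag as ``the analytic core'' --- that for $p$ large the unstable modes are exactly those with $k<\frac{2+\a}{2}\kappa$, whence $K_p\geq\lceil\frac{2+\a}{2}\kappa-1\rceil$ --- is not an afterthought but is essentially the entire content of \cite{AGN=2} together with the singular eigenvalue machinery of \cite{AG-19-part2}; as written your argument asserts this count rather than proving it, so the proposal is a faithful outline rather than a proof. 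Second, your distinctness step needs one more line: for $n<m<2n$ the sector $S_{\frac{\pi}{n}}$ does not contain a full period $\frac{2\pi}{m}$, so ``strict monotonicity forbids finer rotational invariance'' is not immediate; one should instead note that the bisector symmetry plus strict angular monotonicity force the set of angular maxima on each circle to consist of exactly $n$ equally spaced points, which is incompatible with $\mathcal G_m$-invariance unless $m$ divides $n$. (Your appeal to uniqueness of the radial minimizer can also be dispensed with: if $u_p^n$ is radial it has $m_\rad\le m_n\le 2$, hence exactly two nodal zones, and the asymptotic Morse index computation applies to any such radial solution.) With these caveats the approach is the right one.
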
 
\noindent Here 
$\lceil t\rceil = \min\{ k \in {\mathbb Z} \, : \, k\ge t\}$ stands for the ceiling function and if $\bar t$ is the unique root of the equation $2\sqrt e\log t+t=0$ then $\kappa=1+\frac {2\sqrt e}{\bar t} \approx 5.1869$. 
When 
$\a=0$ Theorem \ref{teo2} provides $\lceil \kappa-1\rceil=5 $ nodal nonradial solutions, namely $u_p^1=u_p$, $u_p^2$, $u_p^3$, $u_p^4$ and $u_p^5$ and gives back a previous multiplicity result in \cite{GI} obtained considering similar, but slightly different spaces $H^1_{0,n}$. In any case Theorem \ref{teo1} implies that the solutions in \cite{GI} for $\a=0$ coincide with the ones of Theorem \ref{teo2}. \\
Then, by Theorem \ref{teo2} we have $\lceil \frac {2+\a}2 \kappa-1\rceil $ different nodal nonradial solutions to \eqref{H} when $p$ is large enough that are given by  the nodal least energy $n$-invariant solutions $u_p^1, u_p^2,\dots, u_p^{\lceil \frac {2+\a}2 \kappa-1\rceil }$.\\
Starting from these solutions we want here to study the properties of the nodal sets of $u_p^n$.
In this symmetric setting, indeed, the inequality \eqref{eq:regioni-morse} which relates the number of nodal regions of $u_p^n$ to its Morse index is no longer that clear. Due to the rotations invariance it is enough to consider any function $u\in H^1_{0,n}$ in a sector $S$ of angle $\frac {2\pi}n$.
Denoting by $\widetilde n(u_p^n)$ then 
the number of the nodal regions of $u_p^n$ in $S$ it can be easily derived that 
\[2\leq \widetilde n(u_p^n)\]
since $u_p^n$ changes sign. But the other inequality $\widetilde n(u_p^n)\leq m^n(u_p^n)=2$ does not hold any more since the situation depicted in Fig \ref{fig1} is also possible. This is why we want to investigate here these questions:\\

\begin{figure}[t!]
\includegraphics[scale=2]{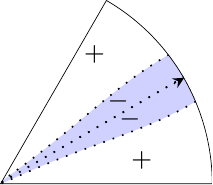}
\caption{The sector $S_{\frac{2\pi}n}$ and one possible nodal configuration}
\label{fig1}
\end{figure}

\noindent 
{\em How many nodal domains do the nodal least energy solutions $u_p^n$ have when they are nonradial?
Does the closure of their nodal set touch the boundary of $B$? Is the nodal set a regular curve?
Which are the possible shapes of their nodal regions?}

\

\noindent We will see that the answer strongly depends on the degree of symmetry of the solution, namely on the values of $n$. To explain the different possibilities that can arise we say that a nodal region is {\em $n$-invariant} if it is invariant by the action of $\mathcal G_n$. Of course a nodal region $n$-invariant is not contained in any sector of angle $\frac {2\pi}n$.
As a consequence of the strict angular monotonicity  in Theorem \ref{teo1} we will see that only 
the following possibilities hold for a least energy nodal solution $u_p^n$ when it is nonradial:\\
{\em case 1)} \  $u_p^n$ admits $2n$ nodal regions in $B$. In this case there exists a connected component of $\mathcal Z_{u_p^n}$ that contains the origin and whose closure intersects $\partial B$, see Fig.1. \\
{\em case 2)} \  $u_p^n$ admits $n+1$ nodal regions in $B$. In this case there exists a sector of angle $\frac{2\pi}n$ that contains a nodal region of $u_p^n$ while the other nodal region is connected and $n$-invariant, see Fig.2.\\
{\em case 3)} \   $u_p^n$ admits $2$ nodal regions in $B$ which are connected, $n$-invariant and the closure of  the nodal set of $u_p^n$ does not touch the boundary of $B$, see Fig.3. 

\begin{figure}[t!]
\includegraphics[scale=2]{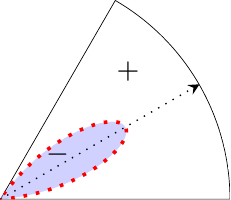}
\quad \quad \quad \quad \quad
\includegraphics[scale=2]{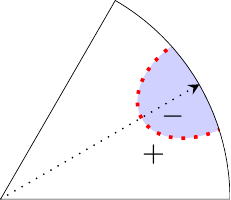}
\caption{Two possible nodal configurations when case 2) occurs}
\label{fig2}
\end{figure}

\noindent In principle some other configurations are possible but they are ruled out by the symmetry and the monotonicity of $u_p^n$ given by Theorem \ref{teo1} and the Morse index estimate in \eqref{eq:morse-due-n}. We can then say that a lower Morse index implies a smaller complexity in the geometry of the nodal configuration of the solutions and this is also true in symmetric spaces. We can then introduce the following definition: 
\begin{definition}
We say that a solution $u$ is quasiradial if it is nonradial, it has only two nodal regions and the closure of its nodal set does not touch the boundary.
\end{definition}
\noindent Of course $u_p^n$ is quasiradial only when case $3)$ happens.

\begin{figure}[t!]
\includegraphics[scale=2]{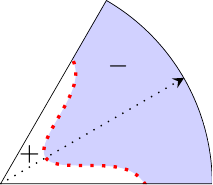}
\caption{A possible nodal configuration when case 3) occurs}
\label{fig3}
\end{figure}

\

In this paper we try to understand the possibile shapes of the nodal zones when $p$ is large and in particular we can prove the following:
\begin{theorem}\label{teo3}
Let $p>p^*$ and let $u_p^n$ be a nodal least energy $n$-invariant nonradial solution as in Theorem \ref{teo2}, for $n=1, \dots, \lceil \frac {2+\a}2 \kappa-1\rceil$. Then, only the possibilities of case $1)$, $2)$ and $3)$ can hold. 
Moreover $u_p^n$ can be of type $1)$ if and only if $n\leq \left[\frac {2+\a} 4 \gamma\right]$ where $\gamma \approx 4.859$ and $\left[\cdot \right]$ is the integer part. It can be of type $2)$ if  and only if $n\leq  \left[\frac {2+\a} 2 \gamma-1\right] $. Finally for $n>  \left[\frac {2+\a} 2 \gamma-1\right] $  $u_p^n$ is of type $3)$, $\mathcal Z_{u_p^n}$ does not intersect $\partial B$ and $u_p^n$ is quasiradial. 
\end{theorem}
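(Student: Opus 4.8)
The plan is to combine the symmetry rigidity of Theorem~\ref{teo1} with a sharp analysis of how $u_p^n$ concentrates as $p\to\infty$. Since $u_p^n$ is nonradial, Theorem~\ref{teo1} gives, up to rotation, symmetry about the bisector $\mathcal B$ of $S_{\frac{2\pi}n}$ and strict monotonicity in the angular variable on $S_{\frac\pi n}$; hence on every circle $\{|x|=r\}$ the trace of $u_p^n$ is extremal on $\mathcal B$ and strictly monotone out to the edge of the sector, so its nodal set inside one sector is either empty or a single simple arc. Testing $Q_{u_p^n}$ against the characteristic functions of the nodal components, exactly as in the derivation of \eqref{eq:regioni-morse}, and invoking $m_n(u_p^n)=2$, one rules out every sectorial pattern but three: a purely radial sign change, which is case~1) when $n=1$ and case~3) in general; fragmentation of the outer region alone into $n$ sectorial pieces with the inner region $n$-invariant, which is case~2); and fragmentation of both regions, which forces a component of $\mathcal Z_{u_p^n}$ through the origin to reach $\pa\B$, which is case~1).

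The quantitative part rests on the blow-up analysis behind Theorem~\ref{teo2}. As $p\to\infty$ the rescaling $p\,u_p^n$ concentrates and its normalized energy decomposes into the energies of its concentration structures. A connected $n$-invariant inner bump sits at the origin and, feeling the full weight $|x|^\a$, carries a singular-Liouville mass proportional to $(2+\a)$; a connected $n$-invariant outer layer concentrates along a circle of positive radius; an off-origin sectorial bump carries a standard mass whose weighted energy, after optimizing its radius, is governed by the transcendental balance whose root is $\g$. Replacing a connected region by $n$ equally spaced sectorial bumps therefore changes the limiting energy by an amount that is affine in $n$.

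Since $u_p^n$ minimizes $\mathcal E_p$ on $\mathcal N_{\nod}^n$, each region fragments precisely when this lowers the limiting energy, i.e. when the connected cost---proportional to $(2+\a)$ through the weight---exceeds the cost of $n$ bumps. This gives two critical values: the outer layer breaks exactly while $n\le\frac{2+\a}2\g-1$ and the inner origin bump breaks exactly while $n\le\frac{2+\a}4\g$, the latter being the smaller one. Taking integer parts, both regions fragment for $n\le[\frac{2+\a}4\g]$, which is case~1); only the outer region for $[\frac{2+\a}4\g]<n\le[\frac{2+\a}2\g-1]$, which is case~2); and neither region for $n>[\frac{2+\a}2\g-1]$, so $\mathcal Z_{u_p^n}$ cannot reach $\pa\B$ and $u_p^n$ is quasiradial.

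The main obstacle is the energy expansion itself: one must prove that the minimizer concentrates exactly on the predicted $\mathcal G_n$-orbit, together with the origin when the inner bump survives, with no loss of mass, and identify the optimal bump radius that yields $\g$. I expect this to require a careful Pohozaev and energy-quantization argument, in which the uniform angular monotonicity of Theorem~\ref{teo1} is used both to exclude spreading of the concentration and to pin down the precise transition values of $n$ at which fragmentation ceases to be favorable.
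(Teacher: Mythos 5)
Your quantitative step is where the proposal breaks down. You derive the thresholds $\left[\frac{2+\a}4\g\right]$ and $\left[\frac{2+\a}2\g-1\right]$ from a claimed asymptotic energy expansion of $u_p^n$ into Liouville-type concentration profiles, together with a principle that ``each region fragments precisely when this lowers the limiting energy.'' Neither ingredient is established: you yourself flag the energy quantization, the no-loss-of-mass property, and the identification of the optimal bump radius as open obstacles, and the fragmentation principle would require knowing the exact limiting configuration of the minimizer, which is a much stronger statement than the theorem needs. The paper avoids all of this with an elementary counting argument. First, each nodal region of any solution carries normalized energy at least $4\pi e$ in the limit: this follows from the Ren--Wei inequality $\|v\|_{p+1}\le D_{p+1}\sqrt{p+1}\,\|\nabla v\|_2$ applied to $u_p\chi_{\Omega_p}$, which lies on the Nehari manifold (Lemmas \ref{lem-RW}--\ref{lem-4}, Corollary \ref{cor-5}). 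Second, since $u_p^\rad\in\mathcal N_{\nod}^n$, minimality gives $p\mathcal E_p(u_p^n)\le p\mathcal E_p(u_p^\rad)\to 2(2+\a)\g\pi e$, where the radial limit is imported from \cite{GGP14} via the change of variable $t=r^{(2+\a)/2}$ (Proposition \ref{prop-6}, Lemma \ref{lem-7}). Dividing, the number of nodal regions is at most $\left[\frac{2+\a}2\g\right]$ (Proposition \ref{prop-9}); the thresholds then drop out of the region counts $2n$ (case 1) and $n+1$ (case 2), with no blow-up analysis whatsoever. Note also that the inference runs in the opposite direction from yours: the paper deduces the bound on $n$ from the configuration, not the configuration from an energy comparison.

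The geometric part of your argument is in the right spirit but too coarse to yield the exact region counts. Asserting that the nodal set in a sector is ``either empty or a single simple arc'' and that the Morse index test rules out ``every sectorial pattern but three'' skips the work that actually pins down the counts $2n$, $n+1$ and $2$: one must exclude critical points of $u_p^n$ on $\mathcal Z_{u_p^n}$ inside the open sector (Lemma \ref{lem11-bis}, using the Hartman--Wintner local structure of nodal sets and the angular monotonicity), control the behaviour of $\mathcal Z_{u_p^n}$ at the bisector, at the sector edges and at the origin (Corollary \ref{cor3}, Lemmas \ref{lem-12} and \ref{lem-14}, both relying on the Hopf lemma), and show that a nodal set whose closure meets $\partial B$ forces at least $n+1$ regions (Lemma \ref{lem-15}), which is what makes case 3) genuinely quasiradial. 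Your identification of a ``purely radial sign change'' with case 1) for $n=1$ and case 3) in general is also off: these are distinct configurations, distinguished precisely by whether the nodal line reaches $\partial B$.
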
 
\noindent See Proposition \ref{prop-6} for the characterization of the constant $\gamma$. 
As simple corollaries we obtain:
\begin{corollary}\label{cor-fin}
For every $\a\geq 0$ problem \eqref{H} admits at least $2$ quasiradial solutions when $p$ is large enough whose nodal set is a smooth curve.
\end{corollary}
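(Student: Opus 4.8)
The plan is to read the quasiradial solutions off directly from Theorem~\ref{teo3} and then count them. By Theorem~\ref{teo2}, for $p>p^*$ we have the $N:=\left\lceil \frac{2+\a}{2}\kappa-1\right\rceil$ distinct nonradial solutions $u_p^1,\dots,u_p^N$, and by Theorem~\ref{teo3} one of these is quasiradial (that is, of type $3)$) precisely when $n>\left[\frac{2+\a}{2}\gamma-1\right]$. Hence the quasiradial solutions among the $u_p^n$ are exactly those whose index lies in the range $\left[\frac{2+\a}{2}\gamma-1\right]<n\le N$, and their number is
\[
\nu(\a):=\left\lceil \tfrac{2+\a}{2}\kappa-1\right\rceil-\left[\tfrac{2+\a}{2}\gamma-1\right].
\]
The corollary will then follow once we know that $\nu(\a)\ge 2$ for every $\a\ge 0$ and that each type $3)$ solution has a smooth nodal curve.

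First I would dispose of the counting. Writing $c:=\frac{2+\a}{2}\ge 1$ and using $\lceil x\rceil\ge x$ together with $[y]\le y$, one gets $\nu(\a)\ge c(\kappa-\gamma)>0$, so that $\nu(\a)\ge 1$ for every $\a\ge 0$: at the very least the most symmetric solution $u_p^N$ is quasiradial. To pass from one to two I would exploit the sign symmetry of the problem: since the nonlinearity $|x|^\a|u|^{p-1}u$ is odd, $-u_p^N$ is again a solution to \eqref{H}, it is nonradial, has two nodal regions and the same interior nodal set as $u_p^N$, hence is quasiradial and distinct from $u_p^N$. This already produces the two required quasiradial solutions; on the ranges of $c$ where $\nu(\a)\ge 2$ one moreover obtains two solutions of genuinely different symmetry type. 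The delicate point here is purely arithmetic, namely the interplay of the floor and ceiling functions with the two close constants $\kappa\approx 5.187$ and $\gamma\approx 4.859$, for which $\kappa-\gamma$ is small; making $\nu(\a)\ge 2$ precise requires splitting $c\ge 1$ into the subintervals on which $\left\lceil c\kappa-1\right\rceil$ and $\left[c\gamma-1\right]$ are locally constant.

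It remains to prove that the nodal set of a type $3)$ solution is a smooth curve. Since $u_p^n$ is a classical solution and the right-hand side $|x|^\a|s|^{p-1}s$ is of class $C^1$ in $(x,s)$ away from the origin, elliptic regularity gives $u_p^n\in C^2_{\loc}(B\setminus\{0\})$, and by the type $3)$ description $\mathcal Z_{u_p^n}$ is compactly contained in the interior of $B$ and avoids the origin, where the weight is singular. The key step is to rule out critical points of $u_p^n$ on $\mathcal Z_{u_p^n}$: if $\nabla u_p^n$ vanished at a nodal point, the local structure of the zero set of a second order elliptic solution would force several nodal arcs to emanate from that point and hence additional local sign changes, which is incompatible with the existence of only two nodal regions, with the Morse index bound \eqref{eq:morse-due-n}, and with the strict angular monotonicity of Theorem~\ref{teo1}. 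With $\nabla u_p^n\neq 0$ on $\mathcal Z_{u_p^n}$ the implicit function theorem shows that $\mathcal Z_{u_p^n}$ is a smooth one dimensional manifold, and the symmetry and monotonicity of Theorem~\ref{teo1} force it to be a single simple closed curve encircling the origin.

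The main obstacle I anticipate is exactly this last non-degeneracy statement, $\nabla u_p^n\neq 0$ along the nodal set: everything else is either bookkeeping of Theorems~\ref{teo2} and~\ref{teo3} or an elementary, if fiddly, estimate on integer parts, whereas excluding degenerate zeros genuinely requires combining the fine local description of nodal sets of elliptic equations with the global symmetry, monotonicity and low Morse index available for $u_p^n$.
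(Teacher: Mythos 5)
Your identification of the quasiradial solutions as the $u_p^n$ with $\left[\frac{2+\a}{2}\gamma-1\right]<n\le\left\lceil\frac{2+\a}{2}\kappa-1\right\rceil$, and your treatment of the smoothness of the nodal line, both match the paper: the absence of critical points on $\mathcal Z_{u_p^n}$ is exactly Lemma \ref{lem11-bis} (Hartman--Wintner local structure plus the angular monotonicity of Theorem \ref{teo1} plus the consequence of \eqref{eq:morse-due-n} recorded in Corollary \ref{cor1}), and the paper then concludes by the implicit function theorem as you do, after noting that in case $3)$ the origin does not lie on the nodal set.

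The genuine gap is in the count of \emph{two}. Your rigorous estimate only gives $\nu(\a)\ge \frac{2+\a}{2}(\kappa-\gamma)>0$, hence $\nu(\a)\ge 1$, and the device you use to reach two, namely pairing $u_p^N$ with $-u_p^N$, is not admissible in this context: throughout the paper solutions differing by a sign change are not counted as distinct (otherwise the count in Theorem \ref{teo2} would double for free and Corollary \ref{cor-fin2} would be contentless), so the corollary is asserting two quasiradial solutions corresponding to two distinct admissible values of $n$. You therefore must actually prove $\nu(\a)=\left\lceil\frac{2+\a}{2}\kappa-1\right\rceil-\left[\frac{2+\a}{2}\gamma-1\right]\ge 2$, a step you explicitly defer. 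The paper's own proof handles it by computing $\nu(0)=\lceil\kappa-1\rceil-[\gamma-1]=5-3=2$ and claiming that $\nu$ is increasing in $\a$; your suspicion that this arithmetic is delicate is well founded, because $\left[\frac{2+\a}{2}\gamma-1\right]$ jumps up at values of $\a$ where $\left\lceil\frac{2+\a}{2}\kappa-1\right\rceil$ need not (for instance, with $\gamma\approx 4.859$ and $\kappa\approx 5.187$, at $\a$ near $0.1$ one finds $\left[\frac{2+\a}{2}\gamma-1\right]=4$ while $\left\lceil\frac{2+\a}{2}\kappa-1\right\rceil=5$, so the difference drops to $1$ there). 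So this is not mere bookkeeping: either one must sharpen the classification of Theorem \ref{teo3} (e.g.\ show that the borderline index $n=\left[\frac{2+\a}{2}\gamma-1\right]$ is also quasiradial on the offending ranges of $\a$), or carry out the floor/ceiling analysis interval by interval. As it stands, your argument secures only one quasiradial solution in the sense intended by the statement.
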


\begin{corollary}\label{cor-fin2}
The number of quasiradial solutions of problem \eqref{H} in Corollary \ref{cor-fin} increases in $\a$ and goes to infinity as $\a\to \infty$.
\end{corollary}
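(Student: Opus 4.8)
The plan is to combine the rigidity furnished by Theorem \ref{teo1} with a sharp asymptotic analysis of $u_p^n$ as $p\to\infty$. First I would show that only the configurations $1)$, $2)$, $3)$ can occur. Since $u_p^n$ is nonradial and satisfies $m_n(u_p^n)=2$ with $p\ge 2$, Theorem \ref{teo1} applies and gives, up to a rotation, symmetry of $u_p^n$ with respect to the bisector $\mathcal B$ of $S_{\frac{2\pi}n}$ together with strict angular monotonicity on the semi-sector $S_{\frac\pi n}$. Consequently the restriction of $u_p^n$ to each circle $\{|x|=r\}$ is monotone in the angle on each half of every fundamental sector, with its angular extremum attained on $\mathcal B$; hence it changes sign at most twice per period $\frac{2\pi}n$, and the sign pattern on the whole circle is determined by the two signs that $u_p^n$ takes on the bisector and on a sector edge as $r$ ranges over $(0,1)$. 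Gluing the $n$ rotated copies, the only $\mathcal G_n$-invariant nodal sets compatible with this monotonicity and with the Morse bound \eqref{eq:morse-due-n} (which excludes a further radial oscillation inside a sector) are: an $n$-fold nodal star through the origin reaching $\partial B$, with $2n$ regions (case $1)$); one interior $n$-invariant region together with $n$ angular petals, with $n+1$ regions (case $2)$); and two nested $n$-invariant regions separated by an interior curve, with $2$ regions (case $3)$).

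To decide which configuration is realized as a function of $n$, I would carry out the blow-up analysis of $u_p^n$ as $p\to\infty$. Using the Hénon change of variable $s=r^{\frac{2+\alpha}2}$, which converts the weight $|x|^\alpha$ into an effective dimension $2+\alpha$, and rescaling $u_p^n$ around the positive and negative maxima of $|u_p^n|$ (which by the bisector symmetry lie on $\mathcal B$), I expect each rescaled nodal bump to converge to a radial solution of the limiting Liouville-type problem already underlying Theorem \ref{teo2}. This pins down the two concentration radii and the intrinsic angular width of each bump. Whether the inner nodal component is squeezed against the origin, rests on a genuine interior circle, or closes up into an $n$-invariant annular region is then governed by comparing the angular spacing $\frac{2\pi}n$ imposed by the symmetry with this intrinsic width.

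The threshold $\gamma$ emerges from that comparison as the root of a transcendental equation of the same nature as the one defining $\kappa$; its precise characterization is the content of Proposition \ref{prop-6}. Quantitatively, the nodal set can reach $\partial B$ through the origin only as long as the $n$ outer bumps still overlap across both edges of each sector, which amounts to $n\le\big[\frac{2+\alpha}4\gamma\big]$ and hence gives case $1)$; an interior $n$-invariant region survives under the weaker single-overlap condition $n\le\big[\frac{2+\alpha}2\gamma-1\big]$, giving case $2)$; and once $n$ exceeds this last value the sectors are too narrow for any overlap, both nodal parts close up into nested $n$-invariant regions, $\mathcal Z_{u_p^n}$ detaches from $\partial B$, and $u_p^n$ is quasiradial (case $3)$). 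The common factor $\frac{2+\alpha}2$ is exactly the effective-dimension factor produced by the change of variable, while the extra halving in the case $1)$ bound records that touching $\partial B$ through the origin requires overlap on both sides of a sector rather than on a single side.

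The main obstacle is precisely this asymptotic step: one must show that the rescaled profiles converge to the expected Liouville limits with enough uniformity to count the nodal regions exactly and, above all, to decide sharply whether the separating curve meets $\partial B$ (or the origin) or stays strictly inside $B$. This requires uniform energy bounds for $u_p^n$, the exclusion of boundary concentration and of any spurious additional bubble, and a careful matching of the inner and outer expansions along the bisector and along the sector edges; here the constraint $m_n(u_p^n)=2$ is the decisive quantitative tool, since it rules out extra concentration points and extra angular sign changes and thereby locks the solution into the trichotomy. Once the sharp radii are known, the three inequalities follow from the elementary overlap comparison, the smoothness of the nodal curve in case $3)$ follows from the implicit function theorem applied to the nondegenerate limit profile, and Corollaries \ref{cor-fin} and \ref{cor-fin2} follow by counting the admissible $n>\big[\frac{2+\alpha}2\gamma-1\big]$ among $1,\dots,\lceil\frac{2+\alpha}2\kappa-1\rceil$.
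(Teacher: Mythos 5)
Concerning the statement you were actually asked to prove, your proposal is right in spirit but stops one line short of the content. The quasiradial solutions produced by Corollary \ref{cor-fin} are the $u_p^n$ with $\left[\frac{2+\a}2\gamma-1\right]<n\leq\lceil\frac{2+\a}2\kappa-1\rceil$, so their number is $\lceil\frac{2+\a}2\kappa-1\rceil-\left[\frac{2+\a}2\gamma-1\right]\geq\frac{2+\a}2(\kappa-\gamma)$, and the whole point of the corollary is that this lower bound increases in $\a$ and diverges \emph{because} $\kappa\approx 5.1869$ is strictly larger than $\gamma\approx 4.859$. You never invoke $\kappa>\gamma$, and without it ``counting the admissible $n>\left[\frac{2+\a}2\gamma-1\right]$ among $1,\dots,\lceil\frac{2+\a}2\kappa-1\rceil$'' yields neither monotonicity nor divergence. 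That is the one genuine gap in the part of your argument addressing the corollary; once the displayed inequality and $\kappa>\gamma$ are added, your final clause becomes exactly the paper's proof.

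Everything before that final clause re-derives Theorem \ref{teo3}, which the corollary can simply cite, and there your route differs substantially from the paper's. You propose a blow-up analysis (convergence of rescaled profiles to Liouville limits, two concentration radii on the bisector, matching of ``angular widths'', exclusion of boundary concentration and spurious bubbles), and you yourself flag that this asymptotic step is the main obstacle; as written it is a program, not a proof. The paper needs none of it: the thresholds come from an energy count. Lemmas \ref{lem-RW}--\ref{lem-4} show each nodal region costs at least $4\pi e$ in the rescaled energy $p\mathcal E_p$, Proposition \ref{prop-6} and Lemma \ref{lem-7} cap the total by the radial value $2(2+\a)\gamma\pi e$, so the number of nodal regions is at most $\left[\frac{2+\a}2\gamma\right]$ (Proposition \ref{prop-9}); Theorem \ref{teo1} together with the Morse index bound \eqref{eq:morse-due-n} then forces the region count to be $2n$, $n+1$ or $2$ in cases 1), 2), 3), and the thresholds $n\leq\left[\frac{2+\a}4\gamma\right]$ and $n\leq\left[\frac{2+\a}2\gamma-1\right]$ are just $2n$ and $n+1$ measured against that budget. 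In particular your ``overlap on both sides of a sector'' explanation of the extra factor $\frac12$ in case 1) is not the mechanism: the factor records that case 1) has $2n$ nodal regions rather than $n+1$. If you want the corollary as a standalone statement, skip the blow-up entirely, quote Theorems \ref{teo2} and \ref{teo3}, and conclude with the counting inequality above.
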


The results are obtained by comparing the energy of the solutions $u_p^n$ with the energy of the radial solution $u_p^\rad$ and this approach has been previously used in \cite{DeIP13} in a different setting. In particular we prove that each nodal region carries a minimum amount of energy which adds up to the energy of the other regions. Nevertheless, since, by construction, $u_p^n$ is a minimum on the Nehari in $H^1_{0,n}$, this sum should be smaller than the energy of the radial solution. Finally the monotonicity in Theorem \ref{teo1}, together with the Morse index estimate \eqref{eq:morse-due-n} bound the number of the nodal regions and of the nodal shape so that only the possibilities 1), 2) and 3) can occur.

\section{Proof of the results}\label{se:2}
\noindent First we recall from \cite{RW} a useful estimate for functions in $H^1_0(\Omega)$. Here by $\|\cdot \|_q$ we mean the norm in $L^q$. 
\begin{lemma}\label{lem-RW}
For every $t\geq 2$ there is $D_t$ such that
\[\|v\|_{t}\leq D_t \sqrt t \|\nabla v\|_2\]
for all $v\in H^1_0(\Omega)$ where $\Omega$ is a bounded domain of $\R^2$. Furthermore
\begin{equation}\label{eq:limit-D}
\lim_{t\to \infty}D_t=(8\pi e)^{-\frac 12}.
\end{equation}
\end{lemma}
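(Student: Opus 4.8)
The plan is to deduce this estimate from the sharp Moser--Trudinger inequality in the plane, which is the route taken in \cite{RW}. Both sides of the claimed inequality are homogeneous of degree one in $v$, so by replacing $v$ with $v/\|\nabla v\|_2$ it suffices to prove the bound for functions normalized so that $\|\nabla v\|_2=1$; the general statement then follows by multiplying through by $\|\nabla v\|_2$. Accordingly I would fix $v\in H^1_0(\Omega)$ with $\|\nabla v\|_2=1$ once and for all.

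The key analytic input is the two-dimensional Moser--Trudinger inequality: there is a universal constant $C$ (independent of $\Omega$) such that
\[
\int_\Omega e^{4\pi v^2}\,dx \leq C|\Omega|
\]
whenever $\|\nabla v\|_2\leq 1$, the exponent $4\pi$ being the sharp one. The remaining ingredient is purely elementary. For each fixed $t\geq 2$ I would optimize the map $a\mapsto a^t e^{-4\pi a^2}$ over $a\geq 0$: the maximum is attained at $a^2=t/(8\pi)$ and equals $\left(\frac{t}{8\pi e}\right)^{t/2}$, giving the pointwise bound
\[
a^t \leq \left(\frac{t}{8\pi e}\right)^{t/2} e^{4\pi a^2}, \qquad a\geq 0 .
\]

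Applying this with $a=|v(x)|$, integrating over $\Omega$, and invoking the Moser--Trudinger estimate yields
\[
\int_\Omega |v|^t\,dx \leq \left(\frac{t}{8\pi e}\right)^{t/2}\int_\Omega e^{4\pi v^2}\,dx \leq \left(\frac{t}{8\pi e}\right)^{t/2} C|\Omega| .
\]
Taking $t$-th roots produces $\|v\|_t \leq \sqrt{t}\,(8\pi e)^{-1/2}(C|\Omega|)^{1/t}$, so one may take $D_t=(8\pi e)^{-1/2}(C|\Omega|)^{1/t}$. Since $(C|\Omega|)^{1/t}\to 1$ as $t\to\infty$, the limit \eqref{eq:limit-D} follows immediately.

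The only subtle point is the first step: it is precisely the sharpness of the exponent $4\pi$ in the Moser--Trudinger inequality that forces the constant $8\pi e$ inside the optimization and hence produces the exact limiting value $(8\pi e)^{-1/2}$. The optimization of $a^t e^{-4\pi a^2}$ and the passage to the limit are routine, so all the real content rests on citing the sharp two-dimensional Moser--Trudinger estimate.
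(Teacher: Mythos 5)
Your proof is correct, but note that the paper itself offers no argument for this lemma: it is simply recalled from \cite{RW}, so the comparison is really with the proof in that reference. Your route --- normalize $\|\nabla v\|_2=1$, invoke the sharp planar Moser--Trudinger inequality $\int_\Omega e^{4\pi v^2}\,dx\le C|\Omega|$, and optimize $a\mapsto a^te^{-4\pi a^2}$ (critical point at $a^2=t/(8\pi)$, maximum $(t/(8\pi e))^{t/2}$) --- is complete and the arithmetic checks out, yielding $D_t=(8\pi e)^{-1/2}(C|\Omega|)^{1/t}\to(8\pi e)^{-1/2}$. The argument in \cite{RW} is somewhat different: it uses Schwarz symmetrization to reduce to a radial decreasing function on a disc of measure $|\Omega|$, the pointwise bound $|u(r)|^2\le\frac{1}{2\pi}\log(R/r)\,\|\nabla u\|_2^2$, and an explicit Gamma-function integral followed by Stirling's formula; that version is self-contained and elementary, while yours is shorter but outsources the hard analysis to the sharpness of the exponent $4\pi$ in Moser's theorem. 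Both produce the same asymptotic constant. Two small points worth making explicit. First, your $D_t$ depends on $|\Omega|$; this is unavoidable, since $\|\nabla v\|_2$ is scale invariant in the plane while $\|v\|_t$ is not, so the lemma must be read with $D_t=D_t(\Omega)$ (harmless here, as every $\Omega$ used later is contained in $B$). Second, you produce an admissible constant with the stated limit, not the optimal one; proving that the best constant also converges to $(8\pi e)^{-1/2}$ would require Moser-type test functions, but only the upper bound is used in Lemma~\ref{lem-2}, so your version fully suffices for the paper's purposes.
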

\noindent Next we apply the previous lemma to functions that belong to the Nehari manifold and we get the following estimate:
\begin{lemma}\label{lem-2}
Let $\Omega\subseteq B$ and let 
$w_p\in H^1_0(\Omega)$ be such that 
\begin{equation}\label{eq:nehari}
\int_{\Omega}|\nabla w_p|^2=\int_{\Omega}|x|^\a|w_p|^{p+1}
\end{equation}
for every $p$.
Then 
\begin{equation}\label{eq:stima-asintotica}
\liminf_{p\to \infty}p\int_{\Omega}|\nabla w_p|^2\geq 8\pi e.
\end{equation}
\end{lemma}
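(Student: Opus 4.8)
The plan is to estimate the $L^{p+1}$ norm of $w_p$ using the Nehari constraint \eqref{eq:nehari} together with the Gagliardo--Nirenberg type bound in Lemma \ref{lem-RW}, tracking the explicit constant $D_t$. First I would apply Lemma \ref{lem-RW} with $t=p+1$ to $w_p\in H^1_0(\Omega)$, obtaining $\|w_p\|_{p+1}\leq D_{p+1}\sqrt{p+1}\,\|\nabla w_p\|_2$. Since $\Omega\subseteq B$ we have $|x|^\a\leq 1$ on $\Omega$, so the right-hand side of \eqref{eq:nehari} satisfies $\int_\Omega |x|^\a |w_p|^{p+1}\leq \int_\Omega |w_p|^{p+1}=\|w_p\|_{p+1}^{p+1}$. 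Combining with the Nehari identity gives
\[
\|\nabla w_p\|_2^2=\int_\Omega |x|^\a|w_p|^{p+1}\leq \|w_p\|_{p+1}^{p+1}\leq \left(D_{p+1}\sqrt{p+1}\,\|\nabla w_p\|_2\right)^{p+1}.
\]

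Next I would rearrange this into a lower bound on $\|\nabla w_p\|_2$. Dividing by $\|\nabla w_p\|_2^2$ (legitimate since $w_p\neq 0$, as the Nehari identity forces a nontrivial function) yields $1\leq \left(D_{p+1}\sqrt{p+1}\right)^{p+1}\|\nabla w_p\|_2^{p-1}$, hence
\[
\|\nabla w_p\|_2^{p-1}\geq \left(D_{p+1}\sqrt{p+1}\right)^{-(p+1)},
\]
so that $\|\nabla w_p\|_2^2\geq \left(D_{p+1}\sqrt{p+1}\right)^{-\frac{2(p+1)}{p-1}}$. Multiplying through by $p$, the quantity to control is
\[
p\,\|\nabla w_p\|_2^2\geq p\,\left(D_{p+1}^2(p+1)\right)^{-\frac{p+1}{p-1}}.
\]

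The main work is then the asymptotic analysis of the right-hand side as $p\to\infty$. I would take logarithms and use \eqref{eq:limit-D}, namely $D_{p+1}^2\to (8\pi e)^{-1}$, together with the fact that the exponent $\frac{p+1}{p-1}\to 1$. Writing $\log\left[p\left(D_{p+1}^2(p+1)\right)^{-\frac{p+1}{p-1}}\right]=\log p-\frac{p+1}{p-1}\left(\log D_{p+1}^2+\log(p+1)\right)$, the delicate point is that $\log p$ and $\frac{p+1}{p-1}\log(p+1)$ both diverge, so I must verify their difference stays controlled; expanding $\frac{p+1}{p-1}=1+\frac{2}{p-1}$ shows the divergent parts cancel up to a term of order $\frac{\log p}{p}\to 0$, while $-\frac{p+1}{p-1}\log D_{p+1}^2\to -\log(8\pi e)^{-1}=\log(8\pi e)$. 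Thus the logarithm tends to $\log(8\pi e)$, giving $\liminf_{p\to\infty} p\|\nabla w_p\|_2^2\geq 8\pi e$, which is exactly \eqref{eq:stima-asintotica}.

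The step I expect to require the most care is precisely this last cancellation of the two logarithmically divergent contributions, since a careless estimate of the exponent $\frac{p+1}{p-1}$ would leave an uncontrolled $\log p$ term; the saving feature is that the correction $\frac{2}{p-1}$ to the exponent multiplies $\log(p+1)$ to produce only an $O\!\left(\frac{\log p}{p}\right)$ remainder. One should also confirm that using $\liminf$ rather than $\lim$ accommodates the fact that we only know the limiting value of $D_t$, not a monotone or two-sided control, so that any lower bound along the sequence suffices for the stated inequality.
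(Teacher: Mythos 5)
Your proof is correct and follows essentially the same route as the paper: apply Lemma \ref{lem-RW} with $t=p+1$, use $|x|^\alpha\leq 1$ to compare the weighted and unweighted $L^{p+1}$ norms, combine with the Nehari identity \eqref{eq:nehari} to arrive at $p\|\nabla w_p\|_2^2\geq p\left((p+1)D_{p+1}^2\right)^{-\frac{p+1}{p-1}}$, and pass to the limit via \eqref{eq:limit-D}. The only difference is cosmetic (you chain the inequalities starting from the Nehari identity rather than from the Sobolev-type bound, and you write out the logarithmic cancellation that the paper leaves implicit), so no further comment is needed.
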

\begin{proof}
By Lemma \ref{lem-RW} with $t=p+1$
we have
\[\int_{\Omega}|\nabla w_p|^2\geq\frac{\left(\int_{\Omega}| w_p|^{p+1}\right)^{\frac 2{p+1}}}{(p+1)D_{p+1}^2}\geq \frac{\left(\int_{\Omega}|x|^\a| w_p|^{p+1}\right)^{\frac 2{p+1}}}{(p+1)D_{p+1}^2}
\]
and, using \eqref{eq:nehari}
\[\left(\int_{\Omega}|\nabla w_p|^2\right)^{\frac {p-1}{p+1}}\geq \left((p+1)D_{p+1}^2\right)^{-1}\]
which gives
\[
p\int_{\Omega}|\nabla w_p|^2\geq p\left((p+1)D_{p+1}^2\right)^{-\frac{p+1}{p-1}}\]
so that \eqref{eq:stima-asintotica} follows using \eqref{eq:limit-D} and passing to the $\liminf$.
\end{proof}
\noindent We can now obtain an estimate of the $L^2$ norm of the gradient of a solution $u_p$ corresponding to each nodal zone, namely:
\begin{lemma}\label{lem-3}
Let $u_p$ be a solution to \eqref{H} and let $\Omega_p\subset B$ be a nodal region of $u_p$. Then
\begin{equation}\label{eq:stima-energy}
\liminf_{p\to \infty}p\int_{\Omega_p}|\nabla u_p|^2\geq 8\pi e.
\end{equation}
\end{lemma}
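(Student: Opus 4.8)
The plan is to reduce the statement directly to Lemma \ref{lem-2} by isolating the restriction of $u_p$ to the single nodal region $\Omega_p$. The point is that all the analytic content—the sharp Moser--Trudinger constant and the resulting lower bound on the Dirichlet energy of a Nehari function—has already been packaged into Lemmas \ref{lem-RW} and \ref{lem-2}. What remains is simply to produce, for each $p$, a function on a \emph{fixed} domain that (i) carries exactly the energy $\int_{\Omega_p}|\na u_p|^2$ and (ii) satisfies the Nehari identity \eqref{eq:nehari}. The natural candidate is the one already used in the Introduction for the Morse-index computation.

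First I would set $w_p:=u_p\chi_{\Omega_p}$, extended by zero outside $\Omega_p$. Since $\Omega_p$ is a connected component of $\{u_p\neq 0\}$, the function $u_p$ vanishes on $\pa\Omega_p\cap B$ and on $\pa B$; cutting an $H^1_0$ function along its nodal set again yields an $H^1_0$ function, so $w_p\in H^1_0(B)$ (this is exactly the fact invoked for $z_p$ in \eqref{eq:regioni-morse}). Next I would verify the Nehari identity on the fixed domain $\Omega=B$. Testing the weak formulation of \eqref{H} against $w_p\in H^1_0(B)$, and using that $\na w_p=\na u_p$ a.e.\ on $\Omega_p$ while $\na w_p=0$ a.e.\ on $B\setminus\Omega_p$, one obtains
\[\int_B|\na w_p|^2=\int_{\Omega_p}|\na u_p|^2=\int_{\Omega_p}|x|^\a|u_p|^{p+1}=\int_B|x|^\a|w_p|^{p+1},\]
which is precisely \eqref{eq:nehari}. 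Here the constant sign of $u_p$ on $\Omega_p$ makes the right-hand side of the tested equation equal to $|x|^\a|u_p|^{p+1}$, and working with the weak formulation avoids any integration-by-parts argument that would require regularity of $\pa\Omega_p$.

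Finally I would apply Lemma \ref{lem-2} with $\Omega=B$ to the family $w_p$, obtaining $\liminf_{p\to\infty}p\int_B|\na w_p|^2\geq 8\pi e$, and conclude via $\int_B|\na w_p|^2=\int_{\Omega_p}|\na u_p|^2$ that \eqref{eq:stima-energy} holds. I do not expect a genuine obstacle: the argument is immediate once Lemma \ref{lem-2} is in hand. The only point deserving care is the membership $w_p\in H^1_0(B)$ together with the reduction of the test integral to $\Omega_p$; both are entirely controlled by the vanishing of $u_p$ on its nodal set, so no new estimate is needed beyond what has already been established.
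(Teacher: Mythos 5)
Your argument is correct and coincides with the paper's own proof: both define the cut-off function $u_p\chi_{\Omega_p}\in H^1_0(B)$, test the weak formulation of \eqref{H} against it to obtain the Nehari identity \eqref{eq:nehari} on $B$, and then apply Lemma \ref{lem-2}. No gaps.
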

\begin{proof}
Let $z_p:=u_p\chi_{\Omega_p}$ where $\chi_\Omega$ denotes the characteristic function of $\Omega$. Then $z_p\in H^1_0(B)$ and $z_p\equiv 0$ in $B\setminus\Omega_p$ so that
\[\int_B|\nabla z_p|^2=\int_{\Omega_p}|\nabla u_p|^2 \  \text{ and } \ \int_{B}|x|^\a|z_p|^{p+1}=\int_{\Omega_p}|x|^\a|u_p|^{p+1}\]
Multiplying \eqref{H} by $z_p$ and integrating in $B$ we get
\[\int_{\Omega_p}|\nabla u_p|^2=\int_B \nabla u_p\nabla z_p=\int_B|x|^\a |u_p|^{p-1}u_pz_p=\int_{\Omega_p}|x|^\a |u_p|^{p+1}
\]
from which it follows that $z_p$ satisfies \eqref{eq:nehari} in $B$. By the previous lemma then 
\[\liminf _{p\to \infty}p\int_B|\nabla z_p|^2\geq 8\pi e.\]
\end{proof}
\noindent It is easy now to obtain an estimate of the energy $\mathcal E_p$ in every nodal region. Indeed
\begin{lemma}\label{lem-4}
Let $u_p$ be a solution to \eqref{H} and let $\Omega_p$ be a nodal region of $u_p$. Then
\begin{equation}\label{eq:stima-energia-basso}
\liminf_{p\to \infty}p\left(\frac 12 \int_{\Omega_p}|\nabla u_p|^2-\frac 1{p+1}\int_{\Omega_p}|x|^\a | u_p|^{p+1}\right)\geq 4\pi e.
\end{equation}
\end{lemma}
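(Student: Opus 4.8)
The plan is to reduce everything to Lemma \ref{lem-3} by exploiting the Nehari identity that $u_p$ satisfies on the single nodal region $\Omega_p$. Recall that in the proof of Lemma \ref{lem-3}, setting $z_p=u_p\chi_{\Omega_p}$ and testing \eqref{H} against $z_p$, one obtains
\[
\int_{\Omega_p}|\nabla u_p|^2=\int_{\Omega_p}|x|^\a|u_p|^{p+1}.
\]
I would use this identity to eliminate the nonlinear term from the energy. Substituting it into the bracket on the left-hand side of \eqref{eq:stima-energia-basso} collapses the two integrals into a single gradient integral:
\[
\frac 12\int_{\Omega_p}|\nabla u_p|^2-\frac 1{p+1}\int_{\Omega_p}|x|^\a|u_p|^{p+1}
=\left(\frac 12-\frac 1{p+1}\right)\int_{\Omega_p}|\nabla u_p|^2
=\frac{p-1}{2(p+1)}\int_{\Omega_p}|\nabla u_p|^2.
\]

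Next I would multiply by $p$ and factor the expression so as to isolate precisely the quantity controlled by Lemma \ref{lem-3}, writing
\[
p\left(\frac 12\int_{\Omega_p}|\nabla u_p|^2-\frac 1{p+1}\int_{\Omega_p}|x|^\a|u_p|^{p+1}\right)
=\frac 12\cdot\frac{p-1}{p+1}\cdot p\int_{\Omega_p}|\nabla u_p|^2.
\]
Since the deterministic prefactor satisfies $\frac{p-1}{p+1}\to 1$ as $p\to\infty$, while Lemma \ref{lem-3} furnishes $\liminf_{p\to\infty}p\int_{\Omega_p}|\nabla u_p|^2\geq 8\pi e$, passing to the $\liminf$ would give the claimed lower bound $\frac 12\cdot 8\pi e=4\pi e$.

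There is essentially no hard step: once the Nehari identity on $\Omega_p$ is in hand the argument is purely algebraic, and that identity is already available from the proof of Lemma \ref{lem-3}. The only point deserving a word of care is the passage to the $\liminf$ in a product. Here I would note that, because the factor $\frac{p-1}{p+1}$ converges to $1$, for every $\e>0$ one has $\frac{p-1}{p+1}\geq 1-\e$ for $p$ large, so that $\frac{p-1}{p+1}\cdot p\int_{\Omega_p}|\nabla u_p|^2\geq(1-\e)\,p\int_{\Omega_p}|\nabla u_p|^2$; taking $\liminf$ and then letting $\e\to 0$ legitimately pulls the convergent factor through, yielding $\liminf_{p\to\infty}\frac{p-1}{p+1}\cdot p\int_{\Omega_p}|\nabla u_p|^2\geq 8\pi e$ and hence \eqref{eq:stima-energia-basso}.
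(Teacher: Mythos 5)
Your proposal is correct and follows essentially the same route as the paper: both use the Nehari identity $\int_{\Omega_p}|\nabla u_p|^2=\int_{\Omega_p}|x|^\a|u_p|^{p+1}$ from the proof of Lemma \ref{lem-3} to rewrite the energy as $\frac{p-1}{2(p+1)}\,p\int_{\Omega_p}|\nabla u_p|^2$ and then pass to the $\liminf$ using \eqref{eq:stima-energy}. Your extra care in justifying the passage of the convergent factor $\frac{p-1}{p+1}$ through the $\liminf$ is a welcome (if routine) addition that the paper leaves implicit.
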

\begin{proof}
We already know from the previous lemma that 
\[\int_{\Omega_p}|\nabla u_p|^2=\int_{\Omega_p} |x|^\a | u_p|^{p+1}\]
so that the energy in \eqref{eq:stima-energia-basso} becomes
\[\begin{split}
&\liminf_{p\to \infty}p\left(\frac 12 \int_{\Omega_p}|\nabla u_p|^2-\frac 1{p+1}\int_{\Omega_p}|x|^\a | u_p|^{p+1}\right) =\liminf_{p\to \infty} \frac {p-1}{2(p+1)}p\int_{\Omega_p}|\nabla u_p|^2\\
&=\frac 12 \liminf_{p\to \infty} p\int_{\Omega_p}|\nabla u_p|^2\geq 4\pi e.
\end{split}\]
\end{proof}
\noindent We can use \eqref{eq:stima-energia-basso} to obtain an estimate from below of the energy of a solution $u_p$ given the number of its nodal regions. 
\begin{corollary}\label{cor-5}
Let $u_p$ be a solution to \eqref{H} that has at least $N$ nodal regions for $p$ large. Then
\begin{equation}\label{eq:stima-energia-basso-totale}
\liminf_{p\to \infty}p\mathcal E_p(u_p)
\geq 4\pi eN.
\end{equation}
\end{corollary}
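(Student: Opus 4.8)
The plan is to exploit the additivity of the energy $\mathcal E_p$ over the nodal regions of $u_p$, reducing the statement to $N$ applications of Lemma \ref{lem-4}. Writing $\Omega_1^p,\dots,\Omega_{k_p}^p$ for the (at least $N$) nodal regions of $u_p$, I would first establish the decomposition
\[
\mathcal E_p(u_p)=\sum_{j=1}^{k_p}\left(\frac12\int_{\Omega_j^p}|\nabla u_p|^2-\frac1{p+1}\int_{\Omega_j^p}|x|^\a|u_p|^{p+1}\right).
\]
This rests on the fact that the nodal set $\mathcal Z_{u_p}=\{u_p=0\}$ has measure zero and that, for a Sobolev function, $\nabla u_p=0$ almost everywhere on $\{u_p=0\}$; hence both $\int_B|\nabla u_p|^2$ and $\int_B|x|^\a|u_p|^{p+1}$ split as the corresponding sums over the $\Omega_j^p$, which are the connected components of $\{u_p\neq0\}$.

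Next, exactly as in the proof of Lemma \ref{lem-3}, each truncation $z_j^p:=u_p\chi_{\Omega_j^p}$ satisfies the Nehari identity $\int_{\Omega_j^p}|\nabla u_p|^2=\int_{\Omega_j^p}|x|^\a|u_p|^{p+1}$, so that the $j$-th summand equals $\frac{p-1}{2(p+1)}\int_{\Omega_j^p}|\nabla u_p|^2>0$ for every $p>1$. In particular all summands are nonnegative, so discarding all but $N$ of them only decreases the total:
\[
p\,\mathcal E_p(u_p)\geq \sum_{j=1}^{N} p\left(\frac12\int_{\Omega_j^p}|\nabla u_p|^2-\frac1{p+1}\int_{\Omega_j^p}|x|^\a|u_p|^{p+1}\right).
\]
Finally I would fix, for each large $p$, an arbitrary selection of $N$ distinct nodal regions, so that each index $j\in\{1,\dots,N\}$ furnishes a family $(\Omega_j^p)_p$ to which Lemma \ref{lem-4} applies, yielding $\liminf_{p\to\infty}$ of the $j$-th term $\geq 4\pi e$. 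By the superadditivity of the $\liminf$ over a finite sum one then obtains $\liminf_{p\to\infty}p\,\mathcal E_p(u_p)\geq 4\pi e N$, which is \eqref{eq:stima-energia-basso-totale}.

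The only point requiring genuine care is the energy decomposition itself: I must know that the nodal set is negligible and that no gradient mass is lost on it, so that the integral over $B$ is exactly the sum of the integrals over the components $\Omega_j^p$. Everything else is bookkeeping: the positivity of each nodal contribution lets me pass from ``at least $N$ regions'' to a clean sum of exactly $N$ nonnegative terms, and the superadditivity of the $\liminf$ converts the $N$ separate estimates of Lemma \ref{lem-4} into the factor $N$ in the conclusion. I remark that, since the lower bound produced inside Lemma \ref{lem-2} is a quantity depending on $p$ alone and not on the region, one could equally well sum the uniform pointwise bound $p\int_{\Omega_j^p}|\nabla u_p|^2\geq p((p+1)D_{p+1}^2)^{-(p+1)/(p-1)}$ over the $N$ regions and pass to the limit directly, bypassing the superadditivity step altogether.
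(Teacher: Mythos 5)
Your proof is correct and follows essentially the same route as the paper: bound $p\mathcal E_p(u_p)$ from below by the sum of the energies over $N$ nodal regions (using the Nehari identity on each region to see that every summand is nonnegative), apply Lemma \ref{lem-4} to each term, and conclude by superadditivity of the $\liminf$. The extra justifications you supply (negligibility of the nodal set for the energy decomposition, the uniformity in the region of the bound from Lemma \ref{lem-2}) are details the paper leaves implicit, not a different argument.
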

\begin{proof}
By assumption $u_p$ has at least $N$ nodal regions that we denote by $\Omega_{1,p},\dots, \Omega_{N,p}$ so that
\[ p\mathcal E_p(u_p)\geq \sum_{j=1}^Np\left(\frac 12 \int_{\Omega_{j,p}}|\nabla u_p|^2-\frac 1{p+1}\int_{\Omega_{j,p}}|x|^\a | u_p|^{p+1}\right)\]
Then using the  $\liminf$ properties and estimate \eqref{eq:stima-energia-basso} in every nodal zone $\Omega_{j,p}$ we get
\[\liminf_{p\to \infty} p\mathcal E_p(u_p)\geq \sum_{j=1}^N \liminf_{p\to \infty}p\left(\frac 12 \int_{\Omega_{j,p}}|\nabla u_p|^2-\frac 1{p+1}\int_{\Omega_{j,p}}|x|^\a | u_p|^{p+1}\right)
\geq \sum_{j=1}^N 4\pi e\]
concluding the proof.
\end{proof}
\noindent We conclude this first part with an estimate of the energy of the radial solution $u_p^\rad$. We deduce it from the estimate on the energy of radial nodal least energy solutions in \cite{GGP14}, using a transformation that relates radial solutions of the two problems introduced in \cite{GGN} and \cite{GGN2}. 
\begin{proposition}\label{prop-6}
Let $u_p^\rad$ be a radial solution to \eqref{H} with two nodal zones. Then
\begin{equation}\label{eq:energia-radiale}
\lim_{p\to \infty}p\mathcal E_p(u_p^\rad)=2(2+\a)\gamma \pi e
\end{equation}
where $\gamma$  is approximately equals to $4.859$ and is given by $\gamma=e^{-\frac{\sqrt e}{\bar t+\sqrt e}}\left(\frac e{\bar t^2}+1+\frac{2\sqrt e}{\bar t}\right)$ where $\bar t$ is the unique root of the equation $2\sqrt e \log t+t=0$.
\end{proposition}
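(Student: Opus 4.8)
The plan is to reduce \eqref{eq:energia-radiale} to the corresponding asymptotic energy for the \emph{autonomous} problem $-\Delta v=|v|^{p-1}v$ in the disc, which is already known, by exploiting the change of variables that relates radial solutions of the two equations. Writing $u_p^\rad=u_p^\rad(r)$ and setting $s=r^{\frac{2+\a}2}$, one checks directly that $v(s):=u_p^\rad(r)$ is radial and solves $-\Delta v=\left(\frac{2}{2+\a}\right)^2|v|^{p-1}v$ in $B$ with $v=0$ on $\partial B$: in dimension two the weight $|x|^\a$ is exactly absorbed by this substitution (the effective dimension stays equal to $2$ and the domain is again the unit disc). This is precisely the transformation between the two problems introduced in \cite{GGN,GGN2}. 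Since $u\mapsto v$ is a bijection between radial functions that preserves the number of nodal zones and, as computed below, rescales the energy by a factor that does not depend on the function, the radial two--nodal minimizer $u_p^\rad$ is mapped onto the radial two--nodal least energy solution of the autonomous equation.

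First I would track the two integrals under $s=r^{\frac{2+\a}2}$. The Jacobian computation (using $N=2$ together with $1+\a=2\cdot\frac{2+\a}2-1$) gives
\[\int_B|\na u_p^\rad|^2=\frac{2+\a}2\int_B|\na v|^2,\qquad \int_B|x|^\a|u_p^\rad|^{p+1}=\frac{2}{2+\a}\int_B|v|^{p+1}.\]
Rescaling $v=\left(\frac{2+\a}2\right)^{\frac2{p-1}}w$ turns $w$ into a solution of the normalized problem $-\Delta w=|w|^{p-1}w$ in $B$. Substituting these relations into $\mathcal E_p$ and using the Nehari identity $\int_B|\na w|^2=\int_B|w|^{p+1}$ valid for solutions, the powers of $\frac{2+\a}2$ produced by the gradient term and by the nonlinear term turn out to coincide and to simplify to $\frac{p+3}{p-1}$, leaving the clean identity
\[\mathcal E_p(u_p^\rad)=\left(\frac{2+\a}2\right)^{\frac{p+3}{p-1}}\mathcal E_p^{\mathrm{aut}}(w),\]
where $\mathcal E_p^{\mathrm{aut}}$ denotes the energy of the autonomous problem.

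It then remains only to pass to the limit. Multiplying by $p$ and letting $p\to\infty$, the prefactor $\left(\frac{2+\a}2\right)^{\frac{p+3}{p-1}}$ converges to $\frac{2+\a}2$, while the asymptotic energy of the radial two--nodal least energy solution of the autonomous equation is known from \cite{GGP14}: after matching the normalizations it reads $\lim_{p\to\infty}p\,\mathcal E_p^{\mathrm{aut}}(w)=4\gamma\pi e$, with $\gamma$ the constant of the statement, governed by the root $\bar t$ of $2\sqrt e\log t+t=0$. Combining the two limits yields
\[\lim_{p\to\infty}p\,\mathcal E_p(u_p^\rad)=\frac{2+\a}2\cdot 4\gamma\pi e=2(2+\a)\gamma\pi e,\]
which is \eqref{eq:energia-radiale}.

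The genuine analytic input (the constant $4\gamma\pi e$ and the concentration analysis behind it) is imported from \cite{GGP14}, so the hard part here is purely the bookkeeping. I would have to verify that the substitution really maps the radial nodal Nehari problem for \eqref{H} onto that of the autonomous equation while preserving both the two--nodal structure and the minimality, and, most delicately, to match exactly the normalizations of \cite{GGN,GGN2} and of \cite{GGP14} so that the limiting constant comes out as $4\gamma\pi e$ and not some spurious multiple. I would also double-check that the limit stated in \cite{GGP14} (possibly for $p\int_B|\na w|^2$ or for $\|w\|_\infty$ rather than for $\mathcal E_p^{\mathrm{aut}}$) is converted correctly, which is immediate for a solution since $\mathcal E_p^{\mathrm{aut}}(w)=\frac{p-1}{2(p+1)}\int_B|\na w|^2$.
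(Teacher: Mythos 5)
Your argument is correct and takes essentially the same route as the paper's proof: the same substitution $t=r^{\frac{2+\alpha}{2}}$ combined with the normalization $\left(\frac{2}{2+\alpha}\right)^{\frac{2}{p-1}}$ reduces $u_p^{\rad}$ to a two--nodal radial solution of the autonomous Lane--Emden problem, the gradient norms are related by the factor $\left(\frac{2+\alpha}{2}\right)^{\frac{p+3}{p-1}}$, and the limiting constant is imported from \cite{GGP14} (stated there as $\lim_{p\to\infty}p\int_B|\nabla v_p|^2=8\pi\gamma e$, equivalent to your $4\gamma\pi e$ for the energy). All your constants and exponents match the paper's computation.
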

\begin{proof}
Letting $v_p(t)=\left(\frac 2{2+\a}\right)^{\frac 2{p-1}}u_p^\rad (r)$, for $t=r^\frac{2+\a}2$ and $r=|x|$, as in \cite[Sez. 2]{AGN=2} it is easily seen that $v_p(t)$ is a radial nodal solution to 
\begin{equation}\label{LE}\left\{\begin{array}{ll}
-\Delta v _p= |v_p|^{p-1} v_p \qquad & \text{ in } B, \\
v_p= 0 & \text{ on } \partial B,
\end{array} \right.
\end{equation}
with two nodal zones and 
\[\int_B |\nabla u_p^\rad |^2=\left(\frac {2+\a}2\right)^{\frac {p+3}{p-1} } \int_B |\nabla v_p|^2 dx
\]
The limit of the energy associated with $v_p$ has been studied in \cite{GGP14} where it is proved that
\[\lim_{p\to \infty} p\int_B |\nabla v_p|^2 dx= 8\pi \gamma e\]
Then \eqref{eq:energia-radiale} follows recalling that 
\[\lim_{p\to \infty} p\mathcal E_p(u_p^\rad)=\frac 12 \lim_{p\to \infty} \left(\frac {2+\a}2\right)^{\frac {p+3}{p-1} }
p\int_B |\nabla v_p|^2 dx.\]
\end{proof}
\noindent The constant $\gamma$ has been characterized in \cite[Theorem 2]{GGP14} and we 
refer the reader to that paper in order to better understand where it comes from.

\

\noindent Now we turn to the least energy $n$-invariant solutions $u_p^n$ and we prove an energy estimate.
\begin{lemma}\label{lem-7}
Let $u_p^n$ be a least energy $n$-invariant solution to \eqref{H}. Then, for every $n\geq 1$
\begin{equation}\label{eq:sima-energy-n}
\limsup_{n\to \infty}p\mathcal E_p(u_p^n)\leq 2(2+\a)\gamma \pi e.
\end{equation}
\end{lemma}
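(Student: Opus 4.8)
The plan is to produce a good competitor on the nodal Nehari manifold $\mathcal N_{\nod}^n$ whose energy is asymptotically controlled by the energy of the radial solution, and then invoke the minimality of $u_p^n$. The key observation is that the radial solution $u_p^\rad$ is itself $n$-invariant for every $n$ (radial functions are $O(2)$-invariant, hence invariant under every $\mathcal G_n$), and it has exactly two nodal regions, so it lies in $\mathcal N_{\nod}^n$. Therefore $u_p^\rad$ is an admissible competitor, and by definition of $u_p^n$ as a minimizer of $\mathcal E_p$ on $\mathcal N_{\nod}^n$ we immediately get the pointwise-in-$p$ bound
\begin{equation}\label{eq:competitor}
\mathcal E_p(u_p^n)\leq \mathcal E_p(u_p^\rad)\qquad\text{for every }p>1.
\end{equation}

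From here the argument is purely a matter of taking limits. First I would multiply \eqref{eq:competitor} by $p>0$, which preserves the inequality, obtaining $p\,\mathcal E_p(u_p^n)\leq p\,\mathcal E_p(u_p^\rad)$. Then I would pass to the $\limsup$ as $p\to\infty$ on the left and use Proposition~\ref{prop-6} on the right, where the limit actually exists and equals $2(2+\a)\gamma\pi e$. Since the right-hand side converges, its $\limsup$ coincides with its limit, yielding
\begin{equation}\label{eq:limsup-conclusion}
\limsup_{p\to\infty}p\,\mathcal E_p(u_p^n)\leq \lim_{p\to\infty}p\,\mathcal E_p(u_p^\rad)=2(2+\a)\gamma\pi e,
\end{equation}
which is exactly \eqref{eq:sima-energy-n}. (I would note in passing that the statement is written with $\limsup_{n\to\infty}$, but since the bound must hold for every fixed $n\geq 1$ and the right-hand side is independent of $n$, the intended limit is the one in $p$; the estimate is uniform in $n$ precisely because $u_p^\rad$ is a competitor for all $n$ simultaneously.)

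The only genuine content is the admissibility claim, so the main point to verify carefully is that $u_p^\rad\in\mathcal N_{\nod}^n=\mathcal N_{\nod}\cap H^1_{0,n}$. Membership in $H^1_{0,n}$ is automatic from radiality. Membership in $\mathcal N_{\nod}$ requires that $u_p^\rad$ change sign with $(u_p^\rad)^\pm\neq 0$ and that $\mathcal E_p'(u_p^\rad)(u_p^\rad)^\pm=0$; this is guaranteed by the construction recalled before Proposition~\ref{prop-6}, since $u_p^\rad$ is built precisely as a minimizer on $\mathcal N_{\nod}^\rad\subseteq\mathcal N_{\nod}$ and has two nodal zones. I anticipate no real obstacle here: the whole lemma reduces to the elementary fact that a more symmetric minimization is carried out over a smaller constraint set, so its energy dominates the less constrained $n$-invariant minimum, combined with the already-established radial energy asymptotics of Proposition~\ref{prop-6}.
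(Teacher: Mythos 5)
Your argument is exactly the paper's: the radial nodal least energy solution $u_p^\rad$ lies in $\mathcal N_{\nod}^n$ for every $n$, so minimality gives $p\,\mathcal E_p(u_p^n)\leq p\,\mathcal E_p(u_p^\rad)$, and the conclusion follows from the radial energy asymptotics of Proposition~\ref{prop-6}. You are also right that the $\limsup_{n\to\infty}$ in the statement is a typo for $\limsup_{p\to\infty}$, as the paper's later use of the lemma (in Proposition~\ref{prop-9}) confirms.
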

\begin{proof}
It easily follows since, by construction, 
\[p\mathcal E_p(u_p^n)\leq p\mathcal E_p(u_p^\rad).\]
\end{proof}
\noindent We are now able to prove an estimate on the number of nodal regions that a least energy $n$-invariant solution can have for any value of $n$, namely:
\begin{proposition}\label{prop-9}
Let $u_p^n$ be a least energy $n$-invariant nodal solution to \eqref{H}. Then $u_p^n$  has at most $N_\a:=\left[\frac {2+\a}2\gamma\right]$ nodal regions for $p$ large, where $\left[ \cdot \right]$ stands for the integer part and $\gamma$ is as in Proposition \ref{prop-6}
\end{proposition}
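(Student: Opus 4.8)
The plan is to combine the lower energy bound from Corollary~\ref{cor-5} with the upper energy bound from Lemma~\ref{lem-7}, squeezing the number of nodal regions. Suppose, for contradiction, that $u_p^n$ has at least $N$ nodal regions for $p$ large. Then Corollary~\ref{cor-5} gives
\[
\liminf_{p\to\infty}p\,\mathcal E_p(u_p^n)\geq 4\pi e\,N,
\]
while Lemma~\ref{lem-7} (comparison with the radial solution, whose energy is pinned down in Proposition~\ref{prop-6}) gives
\[
\limsup_{p\to\infty}p\,\mathcal E_p(u_p^n)\leq 2(2+\a)\gamma\pi e.
\]
Since $\liminf\leq\limsup$ always holds for the same sequence, chaining these two inequalities forces
\[
4\pi e\,N\leq 2(2+\a)\gamma\pi e,
\]
i.e.\ $N\leq \frac{2+\a}{2}\gamma$. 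As $N$ is an integer, this yields $N\leq\left[\frac{2+\a}{2}\gamma\right]=N_\a$.

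First I would fix the notation: let $N$ denote the number of nodal regions of $u_p^n$ for $p$ large and simply apply Corollary~\ref{cor-5} to $u_p^n$ (which is a genuine solution to \eqref{H}, so the corollary applies verbatim). The only mild point to check is that the number of nodal regions can be taken uniform along a sequence $p\to\infty$, so that the $\liminf$ in Corollary~\ref{cor-5} is taken over a sequence on which the hypothesis ``at least $N$ nodal regions'' actually holds; this is exactly the phrasing ``for $p$ large'' built into the statements, so no extra work is needed. After that, the cancellation of the common factor $\pi e$ and division by $2$ is immediate, and passing to the integer part closes the argument.

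The main obstacle, such as it is, is conceptual rather than technical: one must be careful that the two bounds are being applied to \emph{the same} quantity $p\,\mathcal E_p(u_p^n)$, so that the transitivity $\liminf\leq\limsup$ is legitimate. There is no contradiction lurking in applying a $\liminf$ lower bound and a $\limsup$ upper bound to one sequence, precisely because $\liminf\leq\limsup$ for every sequence; the inequality $4\pi e N \leq 2(2+\a)\gamma\pi e$ then follows honestly. No delicate estimate is required here—all the analytic work has already been done in Lemmas~\ref{lem-2}--\ref{lem-4}, Corollary~\ref{cor-5}, Proposition~\ref{prop-6} and Lemma~\ref{lem-7}—so this proposition is essentially a bookkeeping step that converts the energy gap between a solution with many nodal regions and the radial two-region solution into an upper bound on the number of nodal regions.
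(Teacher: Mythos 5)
Your proposal is correct and follows essentially the same argument as the paper: chaining the lower bound of Corollary~\ref{cor-5} with the upper bound of Lemma~\ref{lem-7} via $\liminf\leq\limsup$ and taking the integer part. The paper only adds a trivial preliminary remark disposing of the radial case (two nodal regions), which your argument covers anyway.
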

\begin{proof}
When $u_p^n$ is radial we are done since it has $2<N_\a$ nodal regions. When $u_p^n$ is nonradial let $N$ be the number of its nodal regions as $p\to \infty$. Equation \eqref{eq:stima-energia-basso-totale} implies
\[\liminf_{p\to \infty}p\mathcal E_p(u_p^n)\geq 4\pi e N\]
which together with \eqref{eq:sima-energy-n} implies that
\[4\pi e N\leq \liminf_{p\to \infty}p\mathcal E_p(u_p^n)\leq \limsup_{p\to \infty}p\mathcal E_p(u_p^n)\leq 2(2+\a)\gamma \pi e
\]
showing that 
\[N\leq \frac {2+\a}2\gamma.\]
\end{proof}
\noindent Thanks to Proposition \ref{prop-9} we are in position to obtain some properties of the nodal configurations of $u_p^n$ in order to get Theorem \ref{teo3}.\\ 
We start with a bound on the possible number of nodal regions inside a sector $S$ of angle $\frac {2\pi}n$. 
Clearly there exists an angle $0<\varphi<2\pi$ such that $S=R_\varphi (S_{\frac {2\pi}n})$, where $R_\varphi$ denotes a counterclockwise rotation of angle $\varphi$ centered at the origin.

\begin{lemma}\label{lem-9}
Let $u_p^n$ be a least energy $n$-invariant nodal solution to \eqref{H}. 
There can be at most two nodal regions of $u_p^n$ strictly contained in a sector $S=R_\varphi (S_{\frac {2\pi}n})$.
\end{lemma}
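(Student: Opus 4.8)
The plan is to bound the number of such regions directly by the $n$-invariant Morse index $m_n(u_p^n)=2$ of \eqref{eq:morse-due-n}, using the rotational symmetry to convert each region strictly inside $S$ into an independent direction of negativity for the quadratic form $Q_{u_p^n}$ inside the symmetric space $H^1_{0,n}$. This is the same mechanism recalled in the Introduction, where a single nodal region $\Omega$ yields the test function $u_p^n\chi_\Omega$ with $Q_{u_p^n}(u_p^n\chi_\Omega)=(1-p)\int_\Omega|\nabla u_p^n|^2<0$ (by the Nehari identity of Lemma \ref{lem-3}); the only new ingredient is that the competitors must be made $\mathcal G_n$-invariant so that they live in $H^1_{0,n}$.

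First I would let $\Omega_1,\dots,\Omega_k$ be the nodal regions of $u_p^n$ strictly contained in $S$ and, for each $j$, set
\[
\Psi_j:=u_p^n\,\chi_{E_j},\qquad E_j:=\bigcup_{m=0}^{n-1}R_{\frac{2\pi m}{n}}(\Omega_j).
\]
Since $u_p^n\in H^1_{0,n}$ and the indicator of $E_j$ is invariant under $\mathcal G_n$, while $u_p^n\chi_{\Omega}\in H^1_0(B)$ for every nodal region $\Omega$, one checks that $\Psi_j\in H^1_{0,n}$. The key geometric remark is that, because $\Omega_j$ is strictly contained in the open sector $S$, its rotated copies $R_{\frac{2\pi m}{n}}(\Omega_j)$ lie in the interiors of pairwise distinct sectors and are hence pairwise disjoint (up to a null set); moreover, for $j\neq j'$ the sets $E_j$ and $E_{j'}$ are disjoint too, since $\Omega_j$ and $\Omega_{j'}$ are distinct components inside the same $S$ and copies sitting in different sectors cannot meet.

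Next I would compute $Q_{u_p^n}(\Psi_j)$. As $E_j$ is a disjoint union of $n$ nodal regions of $u_p^n$, additivity of the quadratic form over disjoint supports together with the Introduction's identity gives $Q_{u_p^n}(\Psi_j)=\sum_{m}(1-p)\int_{R_{2\pi m/n}(\Omega_j)}|\nabla u_p^n|^2=n(1-p)\int_{\Omega_j}|\nabla u_p^n|^2<0$. Because the $\Psi_j$ have pairwise disjoint supports, the associated bilinear form has vanishing cross terms, so for every $(c_1,\dots,c_k)\neq 0$ one has $Q_{u_p^n}\big(\sum_j c_j\Psi_j\big)=\sum_j c_j^2\,Q_{u_p^n}(\Psi_j)<0$. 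Thus $Q_{u_p^n}$ is negative definite on the $k$-dimensional subspace $\mathrm{span}\{\Psi_1,\dots,\Psi_k\}\subset H^1_{0,n}$, which forces $k\le m_n(u_p^n)=2$.

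The main point to be careful about is precisely the bookkeeping of the previous step: one must be sure that ``strictly contained in $S$'' genuinely guarantees that the $n$ rotated copies are mutually disjoint and occupy the interiors of distinct sectors, so that each $\Psi_j$ is a nonzero element of $H^1_{0,n}$ and the whole family has disjoint supports (if a region touched the bounding rays of $S$, its copies could glue up or coincide and the count of negative directions would drop). Once this is settled the estimate is immediate; notably it uses neither the energy bounds of Lemma \ref{lem-4} nor the angular monotonicity of Theorem \ref{teo1}, but only the symmetric Morse index \eqref{eq:morse-due-n}.
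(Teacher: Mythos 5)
Your proof is correct and follows essentially the same route as the paper: both arguments turn each nodal region strictly contained in $S$ into an $n$-invariant test function $u_p^n\chi_{E}$ (with $E$ the union of its $n$ rotated copies), verify via the Nehari identity that $Q_{u_p^n}$ is negative on each, and use the disjointness of supports to get $k$ independent negative directions in $H^1_{0,n}$, contradicting $m_n(u_p^n)=2$ when $k>2$. The only cosmetic difference is that the paper phrases it as a contradiction with $\ell>2$ regions, while you bound $k$ directly; your extra remark on the vanishing of the cross terms of the bilinear form is a welcome precision that the paper leaves implicit.
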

\begin{proof}
Assume by contradiction that there exists a sector $S$ of angle $\frac {2\pi}n$ that contains $\ell>2$ distinct nodal regions of $u_p^n$, that we denote by $D_1, D_2, \dots, D_\ell$. We denote then by $D_i^n$ the subset of $B$ obtained by $D_i$ through subsequent rotations of angle $\frac {2\pi}n$, namely $D_i^n:=D_i\cup R_{\frac {2\pi}n}\left(D_i\right)\cup R_{2\frac {2\pi}n}\left(D_i\right)\cup \dots\cup R_{(n-1)\frac {2\pi}n}\left(D_i\right)$. Of course $D_i^n$ is invariant by the action of $\mathcal G_n$ and it has $n$ connected components since we are assuming $D_i\subset S$. 
Next, we let $z_i:=u_p^n\chi_{D_i^n}$ for $i=1,\dots,\ell$ and we observe that $z_i\in H^1_0(B)$, $z_i$ is $n$-invariant, since $D_i^n$ and $u_p^n$ are $n$-invariant, $z_i\neq 0$, $z_i\equiv 0$ in $B\setminus\{D_i^n\}$ and, as in the proof of Lemma \ref{lem-3},  $z_i$ satisfies \eqref{eq:nehari} in $B$. 
We can then infer that
\[Q_{u_p^n}(z_i)=\int_B|\nabla z_i|^2-p\int_B|x|^\a|z_i|^{p+1}=(1-p)\int_B|\nabla z_i|^2<0\]
for $i=1,\dots,\ell$. We have constructed so far $\ell$ functions, $z_1, \dots,z_\ell\in H^1_{0,n}$ which are orthogonal in $L^2(B)$, since they have disjoint supports, and that make negative the quadratic form $Q_{u_p^n}$. This  contradicts \eqref{eq:morse-due-n} and shows that $\ell\leq 2$.
\end{proof}

\noindent As a consequence of the previous proof
we immediately have:
\begin{corollary}\label{cor1}
Let $u_p^n$ be a least energy $n$-invariant nodal solution to \eqref{H}. Then $u_p^n$ can have only two $n$-invariant nodal components that can be connected or not.
\end{corollary}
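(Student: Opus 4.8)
The plan is to argue directly from the proof of Lemma \ref{lem-9} rather than from its statement, exploiting the fact that the quadratic form $Q_{u_p^n}$ admits at most two orthogonal directions on which it is negative. The key observation is that an $n$-invariant nodal region $D$ plays exactly the same role as a single nodal region inside a sector: if I set $z:=u_p^n\chi_D$, then $z\in H^1_{0,n}$ (because both $D$ and $u_p^n$ are invariant under $\mathcal G_n$), $z\neq 0$, and, multiplying \eqref{H} by $z$ and integrating exactly as in Lemma \ref{lem-3}, $z$ satisfies the Nehari identity \eqref{eq:nehari} on $B$. Hence $Q_{u_p^n}(z)=(1-p)\int_B|\nabla z|^2<0$.

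First I would suppose, for contradiction, that $u_p^n$ has $k\geq 3$ distinct $n$-invariant nodal components $D_1,\dots,D_k$. For each I form $z_i:=u_p^n\chi_{D_i}$ as above. Since the $D_i$ are distinct nodal regions they are pairwise disjoint, so the functions $z_1,\dots,z_k$ have disjoint supports and are therefore orthogonal in $L^2(B)$; they are linearly independent and each lies in $H^1_{0,n}$. This produces a $k$-dimensional subspace $X=\mathrm{span}\{z_1,\dots,z_k\}\subseteq H^1_{0,n}$ on which, by orthogonality of supports, the cross terms in $Q_{u_p^n}$ vanish and $Q_{u_p^n}$ is negative definite. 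Thus $m_n(u_p^n)\geq k\geq 3$, contradicting the Morse index bound \eqref{eq:morse-due-n}. This forces $k\leq 2$, which is precisely the assertion that there are at most two $n$-invariant nodal components; they may be connected or not, since the argument never used connectedness of the $D_i$.

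The one subtlety I would want to check carefully is that distinct $n$-invariant nodal regions really do have disjoint closures up to a set of measure zero, so that the corresponding $z_i$ are genuinely orthogonal in $L^2$ and that $Q_{u_p^n}$ has no surviving interaction terms; this is immediate from the definition of nodal regions as the connected components of $\{u_p^n\neq 0\}$ (or unions thereof), since $u_p^n$ vanishes on the common boundary. I therefore expect the main (and only) obstacle to be purely bookkeeping: confirming that the construction of $z_i$ from an $n$-invariant region, as opposed to a single region in a sector, still yields an element of $H^1_{0,n}$ satisfying \eqref{eq:nehari}, which the excerpt has already established in the body of Lemma \ref{lem-9}. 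Consequently the corollary is an immediate reading-off of that proof, and I would phrase it as such.
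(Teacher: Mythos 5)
Your proposal is correct and is essentially the paper's own argument: the authors state the corollary as "a consequence of the previous proof," meaning exactly the construction you spell out, namely that each $n$-invariant nodal component $D_i$ yields $z_i=u_p^n\chi_{D_i}\in H^1_{0,n}$ satisfying \eqref{eq:nehari}, hence $Q_{u_p^n}(z_i)<0$, and three such functions with disjoint supports would contradict $m_n(u_p^n)=2$. Your extra care about disjoint supports and membership in $H^1_{0,n}$ is sound and matches what is already established in the proof of Lemma \ref{lem-9}.
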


\noindent We can also prove the following result:

\begin{lemma}\label{lem-10}
Let $u_p^n$ be a least energy $n$-invariant nodal solution to \eqref{H}. Suppose there 
exists a sector $S$ of angle $\frac {2\pi}n$ that contains two nodal regions of $u_p^n$, $D_1$, $D_2$. 
Then $S\setminus \mathcal Z_{u_p^n}=D_1\cup D_2$ and $u_p^n$ admits $2n$ nodal components in $B$.
\end{lemma}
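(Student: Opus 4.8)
The plan is to run the Morse--index argument of Lemma~\ref{lem-9} one level up, counting the $\mathcal G_n$-\emph{orbits} of nodal regions of $u_p^n$ rather than the regions sitting inside a single sector. The guiding principle is that the number of distinct $\mathcal G_n$-orbits of nodal regions is at most $m_n(u_p^n)=2$. Once this is in place the statement is pure bookkeeping: the two prescribed regions $D_1,D_2\subset S$ generate two distinct orbits, hence \emph{all} orbits, so every nodal region is a rotation of $D_1$ or $D_2$, which forces both $2n$ components in $B$ and $S\setminus\mathcal Z_{u_p^n}=D_1\cup D_2$.

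First I would record the orbit structure. Since $D_1,D_2$ are contained in the open sector $S$ of angle $\frac{2\pi}n$, for $0<k<n$ the rotation $R_{k\frac{2\pi}n}(D_j)$ lies in $R_{k\frac{2\pi}n}(S)$, a sector disjoint from $S$; hence each orbit $\mathcal O_j:=\{R_{k\frac{2\pi}n}(D_j)\}_{k=0}^{n-1}$ consists of $n$ pairwise disjoint regions, and $\mathcal O_1\neq\mathcal O_2$ (as $D_2\subset S$ cannot equal any $R_{k\frac{2\pi}n}(D_1)$ with $k\neq0$, nor $D_1$ itself).

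Next I would produce the negative directions exactly as in the proofs of Lemma~\ref{lem-3} and Lemma~\ref{lem-9}. For $j=1,2$ set $\Omega_j:=\bigcup_{k=0}^{n-1}R_{k\frac{2\pi}n}(D_j)$ and $z_j:=u_p^n\chi_{\Omega_j}$. Each $\Omega_j$ is $\mathcal G_n$-invariant, so $z_j\in H^1_{0,n}$, and testing \eqref{H} against $z_j$ gives the Nehari identity \eqref{eq:nehari} for $z_j$, whence
\[Q_{u_p^n}(z_j)=(1-p)\int_B|\nabla z_j|^2<0.\]
The supports $\Omega_1,\Omega_2$ are disjoint, so $z_1,z_2$ are $L^2$-orthogonal and span a $2$-dimensional subspace of $H^1_{0,n}$ on which $Q_{u_p^n}$ is negative definite. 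If some nodal region of $u_p^n$ were not contained in $\Omega_1\cup\Omega_2$, then its $\mathcal G_n$-orbit would have invariant union disjoint from $\Omega_1\cup\Omega_2$, yielding a third function $z_3\in H^1_{0,n}$ with $Q_{u_p^n}(z_3)<0$ orthogonal to $z_1,z_2$ and hence a $3$-dimensional negative subspace, contradicting \eqref{eq:morse-due-n}. Therefore every nodal region belongs to $\mathcal O_1\cup\mathcal O_2$, giving exactly $2n$ nodal components in $B$; intersecting with $S$, the copies $R_{k\frac{2\pi}n}(D_j)$ with $k\neq0$ lie in sectors disjoint from $S$, so $S\setminus\mathcal Z_{u_p^n}=D_1\cup D_2$.

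The only point to watch is the admissibility and orthogonality of the test functions: one must check that each orbit-union is genuinely $\mathcal G_n$-invariant (so that the $z_j$ stay in the symmetric space $H^1_{0,n}$ in which $m_n$ is computed) and that the Nehari identity survives the characteristic cut-off — both are the computations already carried out in Lemma~\ref{lem-3} and Lemma~\ref{lem-9}. I expect no further difficulty; in particular the angular monotonicity of Theorem~\ref{teo1} is not needed here, the whole conclusion being forced by the Morse bound \eqref{eq:morse-due-n} alone.
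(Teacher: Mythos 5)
Your proof is correct and rests on the same mechanism as the paper's: the Morse index bound \eqref{eq:morse-due-n} forbids three pairwise disjoint $n$-invariant unions of nodal regions, each of which carries a negative direction for $Q_{u_p^n}$ via the Nehari identity and the locality of the quadratic form. The paper packages this as Lemma \ref{lem-9} and Corollary \ref{cor1} and then argues by contradiction on a hypothetical extra piece $\widetilde D\subset S$ contained in a larger component, whereas you apply the orbit count directly to classify every nodal region as a rotation of $D_1$ or $D_2$; this is only an organizational difference (arguably a cleaner one), and your remark that Theorem \ref{teo1} is not needed here is consistent with the paper's argument.
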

\begin{proof}
Assume,  by contradiction, that $\left(B\setminus \mathcal Z_{u_p^n}\right) \cap \left(S \setminus\{D_1\cup D_2\}\right)=\widetilde D\neq \emptyset$. 
The previous lemma implies that any sector $S$ can contain at most two nodal regions of $u_p^n$, meaning that $\widetilde D$ is not a connected component of $B\setminus \mathcal Z_{u_p^n}$ but it is contained in a connected component $D$ of $B\setminus \mathcal Z_{u_p^n}$. If $D$ is $n$-invariant we are done, else we let, as in the previous lemma $D^n$ be the subset of  $B\setminus \mathcal Z_{u_p^n}$ which contains $D$ and is $n$-invariant. We also denote by $D_i^n$ for $i=1,2$, the subsets of $B$ obtained by $D_i$ through subsequent rotations of angle $\frac {2\pi}n$, so that they are $n$-invariant. Obviously $D^n\cap D_i^n=\emptyset$ for $i=1,2$ and this contradicts Corollary \ref{cor1}. \\
Finally, since $\left(B\setminus \mathcal Z_{u_p^n} \right)\cap S =D_1\cup D_2$ then by the rotation invariance of $u_p^n$ it easily follows that $B\setminus  \mathcal Z_{u_p^n}$ admits $2n$ components. 
\end{proof}

\noindent Now we use the monotonicity in Theorem \ref{teo1} to obtain some useful properties of $u_p^n$. We will assume tacitly hereafter that $u_p^n$ is strictly decreasing in the angular variable in the sector $S_{\frac {\pi}n}$.  First we show that:

 \begin{lemma}\label{lem-11}
 Let $u_p^n$ be a least energy $n$-invariant nonradial nodal solution to \eqref{H}. Assume $u_p^n(\bar x)\leq 0$ for $\bar x\in \partial S_{\frac{2\pi}n}\cap \{(x,y): 0<x<1, y=0\}$. Then  $u_p^n(x)<0$ in $\{x\in S_{\frac{2\pi}n}: |x|=|\bar x|\}$.
 \end{lemma}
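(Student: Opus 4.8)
The plan is to read off the conclusion directly from the angular symmetry and strict monotonicity supplied by Theorem \ref{teo1}. Write points of $B$ in polar coordinates $(r,\theta)$, so that the sector $S_{\frac{2\pi}n}$ is $\{(r,\theta):0<r<1,\ 0<\theta<\frac{2\pi}n\}$, the bisector $\mathcal B$ is the ray $\theta=\frac\pi n$, and the semi-sector $S_{\frac\pi n}$ is $\{0<\theta<\frac\pi n\}$. The point $\bar x$ lies on the boundary ray $\theta=0$ at radius $r:=|\bar x|\in(0,1)$, and the set in the conclusion is the open arc $A_r:=\{(r,\theta):0<\theta<\frac{2\pi}n\}$. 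By Theorem \ref{teo1}, together with the tacit normalization that $u_p^n$ is strictly decreasing in the angular variable on $S_{\frac\pi n}$, the map $\theta\mapsto u_p^n(r,\theta)$ is continuous, strictly decreasing on $(0,\frac\pi n)$, and symmetric about $\theta=\frac\pi n$; in particular its maximum over the closed arc $\overline{A_r}$ is attained at the endpoints $\theta=0$ and $\theta=\frac{2\pi}n$, where by reflection it equals $u_p^n(\bar x)\le 0$.

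First I would treat the half-arc $\theta\in(0,\frac\pi n]$. For $\theta\in(0,\frac\pi n)$, pick $\theta'$ with $0<\theta'<\theta$: strict angular monotonicity gives $u_p^n(r,\theta)<u_p^n(r,\theta')$, while continuity of the classical solution up to the ray $\theta=0$ yields $u_p^n(r,\theta')\le u_p^n(r,0)=u_p^n(\bar x)\le 0$ (the latter inequality is the limit, as $\theta'\to 0^+$, of the strict monotonicity bounds). Combining these gives $u_p^n(r,\theta)<0$. For the bisector $\theta=\frac\pi n$, continuity and monotonicity give $u_p^n(r,\frac\pi n)\le u_p^n(r,\theta_0)<0$ for any fixed $\theta_0\in(0,\frac\pi n)$.

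Next I would extend the bound to $\theta\in(\frac\pi n,\frac{2\pi}n)$ using the reflection symmetry about $\mathcal B$: since $u_p^n(r,\theta)=u_p^n\bigl(r,\tfrac{2\pi}n-\theta\bigr)$ with $\tfrac{2\pi}n-\theta\in(0,\tfrac\pi n)$, the previous step applies and yields $u_p^n(r,\theta)<0$ there as well. The two steps together give $u_p^n<0$ on all of $A_r$, which is exactly the claim.

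The only delicate point is upgrading the nonstrict bound $u_p^n\le 0$ — valid on the whole closed arc because $u_p^n(\bar x)$ is the angular maximum — to the \emph{strict} bound $u_p^n<0$ on the open arc; this is precisely where the strict monotonicity of Theorem \ref{teo1}, rather than mere foliated Schwarz symmetry, is used, and it is why negativity is asserted only at interior angles. Beyond this, the argument is a bookkeeping of the two symmetry relations, so I anticipate no serious difficulty, provided one first fixes the orientation of the monotonicity so that the boundary ray $\theta=0$ carrying $\bar x$ is where the angular maximum on each circle is attained.
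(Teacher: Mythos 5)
Your proposal is correct and follows the same route as the paper, whose proof of this lemma is a one-line appeal to the symmetry with respect to $\mathcal B$ and the strict angular monotonicity in $S_{\frac{\pi}n}$ from Theorem \ref{teo1}. You simply spell out the details (maximum of $\theta\mapsto u_p^n(r,\theta)$ at the endpoints of the arc, the limiting argument to pass from the open semi-sector to the boundary ray, and the reflection across the bisector), all of which are handled correctly.
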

 \begin{proof}
It follows by the strict angular monotonicity of $u_p^n$ in $S_{\frac {\pi}n}$ and in $S_{\frac{2\pi}n}\setminus \overline{S_{\frac {\pi}n}}$ given by Theorem \ref{teo1}.
 \end{proof}

\begin{lemma}\label{lem11-bis}
Let $u_p^n$ be a least energy $n$-invariant nodal solution to \eqref{H}. Then it cannot have critical points on $\mathcal Z_{u_p^n}\cap S_{\frac {2\pi}n}$.
\end{lemma}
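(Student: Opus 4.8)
The plan is to prove the contrapositive content directly, namely that $\nabla u_p^n(\bar x)\neq 0$ at every $\bar x\in\mathcal Z_{u_p^n}\cap S_{\frac{2\pi}n}$, distinguishing whether $\bar x$ lies on the bisector $\mathcal B$ or not. Writing the gradient in polar coordinates $(r,\theta)$ as $\nabla u_p^n=\partial_r u_p^n\,\hat r+\tfrac1r\partial_\theta u_p^n\,\hat\theta$, the off-bisector case is immediate: if $\bar x\in S_{\frac{2\pi}n}\setminus\mathcal B$ then $\bar x$ lies in one of the two open semi-sectors where, by the strict angular monotonicity of Theorem \ref{teo1}, $\partial_\theta u_p^n\neq 0$; since $r=|\bar x|>0$ this already forces $\nabla u_p^n(\bar x)\neq 0$, so no critical point sits there, let alone on the nodal set.

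The remaining, and main, difficulty is a point $\bar x\in\mathcal Z_{u_p^n}\cap\mathcal B$. Here the reflection symmetry across $\mathcal B$ given by Theorem \ref{teo1} forces $\partial_\theta u_p^n(\bar x)=0$, so $\nabla u_p^n(\bar x)=\partial_r u_p^n(\bar x)\,\hat r$ and criticality reduces to $\partial_r u_p^n(\bar x)=0$. I would first record that, by strict monotonicity, $\bar x$ is the strict minimum of $u_p^n$ along the arc $\{|x|=|\bar x|\}\cap S_{\frac{2\pi}n}$; applying Hopf's lemma to $w:=\partial_\theta u_p^n$, which solves the linearized equation $-\Delta w=p|x|^\alpha|u_p^n|^{p-1}w$, is $\le 0$ on the semi-sector $S_{\frac\pi n}$ and vanishes on $\mathcal B$ (note $\Delta w\ge 0$ there, so $w$ is subharmonic), yields $\partial_{\theta\theta}u_p^n(\bar x)>0$. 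If in addition $\partial_r u_p^n(\bar x)=0$, then from $u_p^n(\bar x)=0$ and \eqref{H} we get $\Delta u_p^n(\bar x)=0$, whence $\partial_{rr}u_p^n(\bar x)=-|\bar x|^{-2}\partial_{\theta\theta}u_p^n(\bar x)<0$, so that $\bar x$ is a nondegenerate saddle of $u_p^n$.

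Next I would invoke the local structure of nodal sets of solutions of \eqref{H} (a Hartman--Wintner/Bers type result): at a zero where the gradient vanishes, $u_p^n$ behaves to leading order like a homogeneous harmonic polynomial of degree $k\ge 2$, so exactly $2k$ nodal arcs leave $\bar x$ at equal angles, symmetrically placed with respect to $\mathcal B$ since $u_p^n$ is even across $\mathcal B$. Comparing with the at-most-one-zero-per-circle property in each open semi-sector (again strict monotonicity), I would rule out $k\ge 3$: for $k\ge 4$ two arcs enter the same semi-sector on the same side of $\{|x|=|\bar x|\}$, producing two distinct zeros on a nearby circle, and for $k=3$ one arc is tangent to that circle, again giving two nearby zeros in one semi-sector. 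This leaves only the nondegenerate saddle $k=2$ found above.

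The hard part is excluding this last possibility, which is fully consistent with all the local and monotonicity information and must therefore be killed globally. The mechanism is that the transversal crossing of the two nodal arcs at $\bar x$ pinches the nodal set there: near $\bar x$ the set $\{u_p^n<0\}$ splits into an inner lobe (toward the origin) and an outer lobe (toward $\partial B$), both straddling $\mathcal B$ and meeting only at $\bar x$. Rotating by $\mathcal G_n$ and using the single-valued graph description of the nodal set in each semi-sector, I would argue that these lobes organize into two distinct $\mathcal G_n$-invariant negative nodal components which, together with a $\mathcal G_n$-invariant positive component, give three $\mathcal G_n$-invariant nodal regions; cutting $u_p^n$ by the characteristic functions of their $\mathcal G_n$-orbits exactly as in the proof of Lemma \ref{lem-9} produces three $L^2$-orthogonal functions in $H^1_{0,n}$ on which the quadratic form \eqref{eq:Q-u} is negative, contradicting the Morse-index bound \eqref{eq:morse-due-n} (equivalently Corollary \ref{cor1}). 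I expect the genuinely delicate step to be precisely this global bookkeeping of how the pinched lobes connect across the disc, since the purely local saddle cannot be excluded; the monotonicity of Theorem \ref{teo1} is what makes that connection tractable.
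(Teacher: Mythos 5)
Your argument is correct and follows essentially the same route as the paper: a critical zero on the bisector forces at least four nodal arcs meeting there, the strict angular monotonicity makes the circle through that point nonnegative away from it so the negative set splits into an inner and an outer lobe, and the resulting three pairwise disjoint $n$-invariant nodal pieces contradict Corollary \ref{cor1} (equivalently the Morse index bound \eqref{eq:morse-due-n}). Your additional Hopf-lemma computation on $\partial_\theta u_p^n$, which pins down a nondegenerate saddle and makes the $k\ge 3$ exclusion redundant, is a correct quantitative supplement that the paper does not need (it relies only on Hartman--Wintner plus symmetry); the only omission is the radial case, which the paper dispatches in one line via Hopf's lemma on the nodal circle.
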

\begin{proof}
If $u_p^n$ is radial then $\mathcal Z_{u_p^n}$ is a circle of radius $r<1$ that separates the two nodal regions of $u_p^n$ and there cannot be critical points on it, due to the Hopf Lemma. When $u_p^n$ is nonradial instead, by Theorem \ref{teo1} we know that, up to a rotation, its critical points in $S_{\frac {2\pi}n}$ lie on the bisector $\mathcal B$. We can then assume, by contradiction, that there exists one critical point $\bar x$ on $\mathcal B\cap \mathcal Z_{u_p^n}$. We recall first that if a point $\bar x$ belongs to the nodal set, then there exists a positive radius $r$ such that $\{(u_p^n)^{-1}(0)\}\cap  B(\bar x , r)$ is made of $2k$ $C^1$-simple arcs, for some
integer $k\geq 1$, which all end in $\bar x$ and whose tangent lines at $\bar x$ divide the disc into $2k$ angles of equal amplitude, see \cite{HW} or \cite[Theorem 2.1]{HHT}. In particular if $\nabla u_p^n(\bar x)=0$,  then $k\geq 2$ and $\{(u_p^n)^{-1}(0)\}\cap  B(\bar x , r)$ is made of at least four $C^1$-simple arcs ending in $\bar x$. By the symmetry and the strict angular monotonicity of $u_p^n$ in $ S_{\frac {2\pi}n}$ the unique possible configuration in this case is the one in Figure 4.

\begin{figure}[t!]
\includegraphics[scale=2]{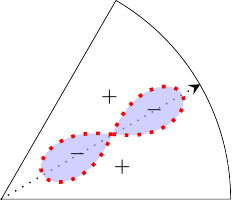}
\quad \quad \quad \quad \quad
\includegraphics[scale=2]{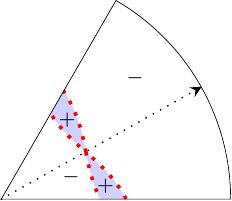}
\caption{Two possible nodal configurations when $u_p^n$ has a critical point on the bisector $\mathcal B$}
\label{fig4}
\end{figure}

\noindent Indeed if $u_p^n(\bar x)=0$ then $u_p^n (x)>0$ for every $x\in \bar S_{\frac {2\pi}n}$ such that $|x|=|\bar x|$ and $x\neq \bar x$, see Lemma \ref{lem-11}. In $\bar x$ the set $\mathcal Z_{u_p^n}$ is made of at least $4$ simple arcs ending in $\bar x$ which are the boundaries or part of the boundaries of the nodal regions of $u_p^n$, that can neighbour each other only if $u_p^n$ has different sign within them. Then $u_p^n$ admits a nodal region contained in $B(0,|\bar x|)$ in which $u_p^n<0$ that we call $D_1$ and another one in which $u_p^n$
 is negative, 
contained instead in $B\setminus \bar B(0,|\bar x|)$, that we denote by $D_2$. These regions are separated by the nodal region in which $u_p^n$ is positive that we call $D_3$. Calling then $D_1^n$, $D_2^n$ and $D_3^n$ the $n$-invariant extensions of $D_1$, $D_2$ and $D_3$ we obtain a contradiction with Corollary \ref{cor1}. So $u_p^n$ does not admit critical points on $\mathcal Z_{u_p^n}\cap S_{\frac {2\pi}n}$. 
\end{proof}
\noindent As corollaries of the previous lemma we have:
\begin{corollary}\label{cor3}
Let $u_p^n$ be a least energy $n$-invariant nonradial nodal solution to \eqref{H}.
The set $\mathcal Z_{u_p^n}$ can contain at most one critical point of $u_p^n$ which is the origin.
\end{corollary}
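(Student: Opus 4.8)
The plan is to prove the statement by contradiction: I show that $u_p^n$ admits no critical point on $\mathcal Z_{u_p^n}$ in the punctured disc $B\setminus\{0\}$, so that the origin is the only point that can possibly occur. The whole argument is an upgrade of Lemma \ref{lem11-bis}, which already excludes critical points on $\mathcal Z_{u_p^n}$ inside the \emph{open} sector $S_{\frac{2\pi}n}$, to the entire punctured disc, and the tool that makes this possible is the symmetry of $u_p^n$.

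First I would record the full symmetry group of $u_p^n$. By construction $u_p^n$ is invariant under $\mathcal G_n$, and by Theorem \ref{teo1} (under the standing normalization) it is symmetric with respect to the bisector $\mathcal B$. These two symmetries generate a dihedral action whose mirror lines are the rays at angles $\frac{k\pi}n$, $k=0,\dots,n-1$; these are precisely the bisectors and the edges of the sectors $R_{\frac{2k\pi}n}(S_{\frac{2\pi}n})$. Along each such mirror line the angular derivative of $u_p^n$ vanishes by reflection symmetry, whereas the strict angular monotonicity of Theorem \ref{teo1} forces the angular derivative to be nonzero in the interior of every semi-sector $R_\varphi(S_{\frac\pi n})$. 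Hence any critical point of $u_p^n$ lying in $B\setminus\{0\}$ must sit on one of these mirror lines, and I only have to examine points on bisectors and on edges.

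Next I split into these two cases. If a critical point $\bar x\neq 0$ on $\mathcal Z_{u_p^n}$ lies on a bisector, then, since the $n$-invariance of $u_p^n$ makes both $\mathcal Z_{u_p^n}$ and the critical set $\mathcal G_n$-invariant, a suitable rotation in $\mathcal G_n$ brings $\bar x$ onto $\mathcal B\cap\mathcal Z_{u_p^n}\cap S_{\frac{2\pi}n}$, and Lemma \ref{lem11-bis} gives the contradiction at once. The edge case is the genuinely new one, because an edge ray such as $\partial S_{\frac{2\pi}n}\cap\{(x,y):0<x<1,\ y=0\}$ is not contained in the open sector treated in Lemma \ref{lem11-bis}. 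Here I would repeat verbatim the argument of that lemma, with the reflection symmetry of $u_p^n$ across the edge playing exactly the role that symmetry across $\mathcal B$ played there: if $\nabla u_p^n(\bar x)=0$ then $\mathcal Z_{u_p^n}$ meets a neighbourhood of $\bar x$ in at least four simple arcs, and the strict angular monotonicity in the two adjacent semi-sectors forces the same island configuration as in Figure \ref{fig4}. This produces three nodal regions $D_1$, $D_2$, $D_3$ whose $n$-invariant extensions $D_1^n$, $D_2^n$, $D_3^n$ are pairwise disjoint, contradicting Corollary \ref{cor1}.

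Combining the two cases shows that $u_p^n$ has no critical point on $\mathcal Z_{u_p^n}$ in $B\setminus\{0\}$, which is the assertion, the origin being the only remaining candidate. I expect the main obstacle to be precisely the edge case: one must check that the local nodal picture of $2k$ arcs meeting at equal angles is compatible both with the reflection across the edge and with the monotonicity in the two adjacent semi-sectors, so that the forced configuration really yields three distinct $\mathcal G_n$-invariant components. Once this compatibility is verified, the contradiction with Corollary \ref{cor1} is identical to the one in Lemma \ref{lem11-bis}.
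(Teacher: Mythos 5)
Your proof is correct and follows essentially the same route as the paper: reduce to critical points on the mirror lines of the dihedral symmetry, dispatch the bisector case by Lemma \ref{lem11-bis}, and then treat the sector edges separately. The only difference is in the edge case: where you re-run the argument of Lemma \ref{lem11-bis} with the reflection across the edge in place of the reflection across $\mathcal B$, the paper gets the same conclusion more economically by noting that $-u_p^n$, after a rotation of angle $\frac{\pi}{n}$, is again a least energy $n$-invariant nonradial solution in the normalization of Theorem \ref{teo1} whose bisector is the edge of the original sector, so Lemma \ref{lem11-bis} applies to it verbatim. Since both versions rest on the same ingredients (the dihedral symmetry, the strict angular monotonicity, the local structure of the nodal set, and the contradiction with Corollary \ref{cor1}), this is a cosmetic rather than substantive difference.
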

\begin{proof} By Lemma \ref{lem11-bis} $u_p^n$ does not have critical points on $\mathcal Z_{u_p^n}\cap S_{\frac {2\pi}n}$. It is possible however that it has critical points on $\mathcal Z_{u_p^n}\cap \partial S_{\frac {2\pi}n}\setminus\{O\}$. But then the function $-u_p^n$ is still a least energy $n$-invariant nonradial nodal solution to \eqref{H} such that its rotation of angle $\frac \pi n$ is decreasing in $S_{\frac \pi n}$ and has a critical point on the bisector of $S_{\frac {2\pi}n}$, which is impossible by the previous lemma. Then the only critical point of $u_p^n$ on $\mathcal Z_{u_p^n}$ can be the origin.
\end{proof}

\begin{corollary}\label{cor2}
The set $\mathcal Z_{u_p^n}\setminus\{O\}$ is locally a smooth curve.
\end{corollary}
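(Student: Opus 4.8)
The plan is to read off this corollary directly from Corollary \ref{cor3} via the implicit function theorem. The underlying principle is that a zero level set of a $C^1$ function is a smooth curve near any point at which the gradient does not vanish, and Corollary \ref{cor3} has just told us that the only point of $\mathcal Z_{u_p^n}$ where $\na u_p^n$ can vanish is the origin, which we are excluding.

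First I would record the regularity of $u_p^n$ away from the origin. On $B\setminus\{O\}$ the weight $|x|^\a$ is smooth and the nonlinearity $t\mapsto|t|^{p-1}t$ is $C^1$ for $p>1$, so the right-hand side of \eqref{H} is Hölder continuous there; by standard elliptic (Schauder) estimates $u_p^n$ is a classical solution and in particular $u_p^n\in C^1(B\setminus\{O\})$ (indeed $u_p^n\in C^{2,\b}_{\loc}(B\setminus\{O\})$, which upgrades the curve's regularity accordingly). This is all the smoothness the argument needs.

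Next, fix an arbitrary point $\bar x\in\mathcal Z_{u_p^n}\setminus\{O\}$, so that $u_p^n(\bar x)=0$. By Corollary \ref{cor3} the only admissible critical point of $u_p^n$ on its nodal set is the origin, whence $\na u_p^n(\bar x)\neq 0$. Applying the implicit function theorem to the $C^1$ function $u_p^n$ near $\bar x$, the set $\{u_p^n=0\}$ is locally the graph of a $C^1$ function of one variable, hence a smooth simple curve through $\bar x$. As $\bar x$ was arbitrary, $\mathcal Z_{u_p^n}\setminus\{O\}$ is locally a smooth curve.

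There is essentially no analytic obstacle: the statement is an immediate byproduct of the non-vanishing of the gradient established in Corollary \ref{cor3}. The only points deserving a word of care are that the origin must be excluded, since it is the sole location where the nodal set may branch, and that the smoothness of the weight and of the nonlinearity away from $O$ is what licenses the implicit function theorem. Equivalently, one could invoke the local structure result of \cite{HW}, \cite{HHT} already used in Lemma \ref{lem11-bis}: at a nodal point where $\na u_p^n\neq 0$ one has $k=1$, so $\mathcal Z_{u_p^n}$ consists locally of exactly two $C^1$-arcs ending at $\bar x$, that is, a single smooth simple curve, giving the same conclusion.
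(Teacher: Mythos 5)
Your proof is correct and follows exactly the route the paper takes: the paper's entire justification is ``It follows by the Implicit Function Theorem,'' and you have simply filled in the details, namely the elliptic regularity of $u_p^n$ away from the origin and the non-vanishing of $\nabla u_p^n$ on $\mathcal Z_{u_p^n}\setminus\{O\}$ supplied by the preceding corollary. No discrepancy to report.
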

\noindent It follows by the Implicit Function Theorem.
\begin{corollary}\label{cor3}
If $O\in \mathcal Z_{u_p^n} $ then $O$ is a critical point of order $n$.
\end{corollary}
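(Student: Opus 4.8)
The plan is to prove that, if $O\in\mathcal Z_{u_p^n}$, then $u_p^n$ vanishes at the origin to order exactly $n$, equivalently that precisely $2n$ simple $C^1$-arcs of $\mathcal Z_{u_p^n}$ meet at $O$; for $n\geq 2$ this forces $\nabla u_p^n(O)=0$, so that $O$ is then a genuine critical point and its order is $n$.

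First I would record the local structure of the nodal set at $O$. Near the origin $u_p^n$ solves $-\Delta u=V\,u$ with $V(x)=|x|^{\a}|u_p^n(x)|^{p-1}$, which is continuous and bounded since $u_p^n$ is continuous and $u_p^n(O)=0$. Hence, by the local description of the nodal set recalled in the proof of Lemma \ref{lem11-bis} (see \cite{HW} and \cite[Theorem 2.1]{HHT}), there is a finite integer $N\geq 1$ and a nonzero homogeneous harmonic polynomial $P_N$ of degree $N$ with $u_p^n(x)=P_N(x)+o(|x|^N)$; moreover $\mathcal Z_{u_p^n}$ near $O$ consists of $2N$ simple $C^1$-arcs ending at $O$, tangent to the $2N$ equispaced rays $\theta=\theta_0+k\frac{\pi}{N}$. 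The whole statement thus reduces to showing $N=n$.

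Next I would get $n\mid N$ from the symmetry. Rescaling $u_p^n(\rho\,\cdot)/\rho^N\to P_N$ shows that $P_N$ inherits the $\mathcal G_n$-invariance of $u_p^n$: indeed $P_N\circ R_{\frac{2\pi}n}-P_N$ is homogeneous of degree $N$ and is $o(|x|^N)$, hence vanishes identically. Writing $P_N=c\,r^N\cos(N\theta-\theta_0)$ in polar coordinates, invariance under $\theta\mapsto\theta+\frac{2\pi}n$ forces $N\frac{2\pi}n\in 2\pi\mathbb{Z}$, i.e. $n\mid N$; in particular $N\geq n$, and exactly $\frac{2N}{n}$ of the $2N$ nodal rays lie in each fundamental sector of angle $\frac{2\pi}n$. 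I would then extract the reverse bound from Theorem \ref{teo1}: up to a rotation $u_p^n$ is symmetric with respect to $\mathcal B$ and strictly decreasing in the angular variable on $S_{\frac{\pi}n}$, so on every circle $|x|=\rho$ the map $\theta\mapsto u_p^n(\rho,\theta)$ is strictly decreasing on $(0,\frac{\pi}n)$ and, by symmetry, strictly increasing on $(\frac{\pi}n,\frac{2\pi}n)$, hence unimodal over one period and possessing at most two zeros in $[0,\frac{2\pi}n)$. On the other hand, for $\rho$ small each of the $\frac{2N}{n}$ nodal arcs in the sector meets the circle exactly once, producing $\frac{2N}{n}$ zeros there (and this number is positive, since the nonzero harmonic polynomial $P_N$ changes sign). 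Therefore $\frac{2N}{n}\leq 2$, i.e. $N\leq n$, which combined with $n\mid N$ gives $N=n$, as claimed.

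The delicate point is this last counting step: one must make rigorous that the strict angular monotonicity of $u_p^n$ on small circles genuinely caps the number of nodal crossings, and that each emanating arc crosses every sufficiently small circle exactly once. I would handle this through the $C^1$ convergence of the blow-ups $u_p^n(\rho\,\cdot)/\rho^N$ to $P_N$, so that $\partial_\theta u_p^n(\rho,\cdot)$ has the sign of $\partial_\theta P_N$ away from the latter's zeros for small $\rho$, together with the transversality of the arcs to small circles provided by the Hartman--Wintner expansion; by comparison the divisibility step $n\mid N$ is straightforward. Once $N=n$ is established, for $n\geq 2$ all derivatives of $u_p^n$ up to order $n-1$ vanish at $O$, so that $\nabla u_p^n(O)=0$ and $O$ is a critical point of order $n$.
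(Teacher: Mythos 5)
Your proposal is correct and follows essentially the same route as the paper: both rest on the Hartman--Wintner/Helffer--Hoffmann-Ostenhof--Terracini description of the nodal set as $2N$ equispaced $C^1$-arcs at $O$, and both obtain the upper bound $N\le n$ by using the strict angular monotonicity from Theorem~\ref{teo1} to cap the number of nodal arcs (equivalently, of sign changes on small circles) in each period at two. The only variation is in the lower bound: you deduce $N\ge n$ from the $\mathcal G_n$-invariance of the leading homogeneous harmonic polynomial (giving $n\mid N$), whereas the paper argues directly that one arc must lie in the open half-sector $S_{\frac{\pi}{n}}$ after excluding that it runs along the bisector $\mathcal B$; this is a minor, equally valid substitution rather than a different method.
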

\begin{proof}
By symmetry $O$ is a critical point for $u_p^n$ and when $O\in \mathcal Z_{u_p^n}$ there exists a positive radius $r$ such that $\{(u_p^n)^{-1}(0)\}\cap  B(O , r)$ is made of $2k$ $C^1$-simple arcs, for some
integer $k\geq 2$. By Theorem \ref{teo1}, we can assume, up to a rotation, that $u_p^n$ is strictly decreasing in the angular variable in $S_{\frac \pi n}$. This implies that in the sector $S_{\frac \pi n}$ there should be one of the $2k$ $C^1$-simple arcs of $\{(u_p^n)^{-1}(0)\}\cap  B(O , r)$. If this is not the case then this arc coincide with the bisector $\mathcal B$, but since the nodal set is the boundary of two consecutive nodal regions in which the sign of $u_p^n$ is opposite, then this case is not possible due to the symmetry of $u_p^n$ with respect to $\mathcal B$, (see Theorem \ref{teo1}). So one arc is contained in $S_{\frac \pi n}$ and, by the symmetry of $u_p^n$, another arc is contained in $S_{\frac{2\pi}n}\setminus \overline{ S_{\frac \pi n}}$. The invariance of $u_p^n$ with respect to $\mathcal G_n$ then implies that $k\geq n$. On the other hand $k>n$ cannot hold, otherwise in the sector $\overline{ S_{\frac \pi n}}$  there should be at least $2$ simple arcs which are the boundaries of nodal regions of $u_p^n$ meaning that in $ \overline{ S_{\frac \pi n}}\cap B(O , r)$ $u_p^n$ changes sign at least $3$ times 
and this is not possible since it is strictly decreasing in the angular variable.
\end{proof}

\begin{lemma}\label{lem-12}
Let $u_p^n$ be a least energy $n$-invariant nodal solution to \eqref{H}. Assume $\mathcal Z_{u_p^n}$ admits a connected component that contains the origin and whose closure intersects $\partial B$. Then $u_p^n$ has $2n$ nodal regions in $B$.
\end{lemma}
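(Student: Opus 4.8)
The plan is to show that the hypothesis forces $\mathcal Z_{u_p^n}$ to be, away from the center, a union of exactly $2n$ smooth arcs joining $O$ to $\partial B$, which then cut $B$ into $2n$ regions. First I would record the local picture at the center: since $O\in\mathcal Z_{u_p^n}$, Corollary~\ref{cor3} tells us that $O$ is a critical point of order $n$, so in a small disc $B(O,r)$ the nodal set consists of exactly $2n$ simple $C^1$–arcs ending at $O$ with equally spaced tangent directions. Using the reflection symmetry about the bisector from Theorem~\ref{teo1}, together with the fact (seen in the proof of that same corollary) that no nodal arc can lie along $\mathcal B$, the tangents must be placed symmetrically and off the bisector, so that exactly one of these $2n$ arc–germs bisects each of the $2n$ rotated and reflected semi‑sectors $S_{\frac{\pi}n}$.

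Next I would exploit the strict angular monotonicity of Theorem~\ref{teo1}. On every circle $\{|x|=\rho\}$ with $0<\rho<1$, the function $u_p^n$ is strictly monotone in the angle on each open semi‑sector, hence vanishes there at most once; consequently the nodal set inside an open semi‑sector is the graph $\rho\mapsto\theta(\rho)$ of a single arc. The key intermediate claim is that $u_p^n$ keeps a constant sign along every bisector and along every seam $\partial S_{\frac{2\pi}n}$ for $0<\rho<1$. Indeed $u_p^n$ is symmetric about each bisector, and, combining this with the $\mathcal G_n$–invariance, also about each seam; a zero on such a symmetry line would force $\nabla u_p^n$ to be radial there, making the nodal line tangent to the circle and thus producing two zeros on nearby circles inside a semi‑sector, which contradicts the strict monotonicity (equivalently, it would create a critical point of $\mathcal Z_{u_p^n}$ other than $O$, excluded by Corollary~\ref{cor3} and Lemma~\ref{lem11-bis}). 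Reading off the signs from the local star at $O$, this gives $u_p^n>0$ on every bisector and $u_p^n<0$ on every seam for all $0<\rho<1$.

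Granting the constant–sign claim, on each circle $\{|x|=\rho\}$, $0<\rho<1$, and in each closed semi‑sector the function passes from positive on the bisector to negative on the seam, so by strict monotonicity it has exactly one zero. Hence in each open semi‑sector the nodal set is a single smooth arc (Corollary~\ref{cor2}) defined for all $\rho\in(0,1)$ with no gaps: it starts at $O$, has no further endpoint in $B$, and cannot cross a bisector or a seam, so it must terminate on $\partial B$, in accordance with the hypothesis that the component of $\mathcal Z_{u_p^n}$ through $O$ has closure meeting $\partial B$. Collecting the $2n$ semi‑sectors, $\mathcal Z_{u_p^n}$ is exactly $\{O\}$ together with these $2n$ arcs from $O$ to $\partial B$; such a spoke configuration partitions $B$ into precisely $2n$ connected components, on which $u_p^n$ takes alternating signs, yielding the asserted $2n$ nodal regions.

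The main obstacle is the constant–sign step, namely ruling out zeros of $u_p^n$ on the bisectors and on the seams for $0<\rho<1$; this is the delicate local analysis of a nodal line along a symmetry axis, and it is precisely where the order–$n$ structure at $O$, the seam symmetry inherited from $\mathcal G_n$, and the strict angular monotonicity must be used together (the Morse bound \eqref{eq:morse-due-n} enters only indirectly, through Theorem~\ref{teo1} and Corollaries~\ref{cor3}, \ref{cor1}). Once this is settled the count is immediate and, notably, does not reduce to Lemma~\ref{lem-10}: in the present configuration each sector $S_{\frac{2\pi}n}$ contains only the single bisector–centered nodal region entirely, the two neighbouring regions straddling its bounding seams, so the $2n$ regions are obtained directly rather than as $n$ rotated copies of two sector–confined domains.
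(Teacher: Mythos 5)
Your overall strategy --- reducing to the picture in which each of the $2n$ semi-sectors contains exactly one nodal ``spoke'' running from $O$ to $\partial B$ --- is reasonable and the use of Corollary \ref{cor3} for the local star at the origin matches the paper. But the step on which everything rests, namely the claim that $u_p^n$ keeps a constant sign along every bisector and every seam for $0<\rho<1$, is not correctly justified, and the justification you give would prove a false statement. At a point $\bar x$ of a symmetry line with $u_p^n(\bar x)=0$, the symmetry only forces the angular derivative to vanish; the radial derivative is nonzero (Lemma \ref{lem11-bis} excludes critical points there), so $\bar x$ is \emph{not} a critical point and Lemma \ref{lem11-bis} does not rule it out. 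The nodal line through $\bar x$ is then a single smooth arc crossing the symmetry line perpendicularly, i.e.\ tangent to the circle $|x|=|\bar x|$, and on nearby circles it produces two zeros placed symmetrically about the axis --- that is, \emph{one} zero in each of the two adjacent semi-sectors, which is perfectly compatible with the strict angular monotonicity of Theorem \ref{teo1}. Such tangential crossings of the bisector or of a seam are not pathological: they occur for every solution of type 2) or 3) (for instance where the closed nodal curve of a quasiradial solution meets $\mathcal B$), and also in the configuration of Lemma \ref{lem-14} with $O\in\partial D_1$, where $O$ lies in the nodal set, the order-$n$ star at the origin is present, and yet the solution has only $n+1$ nodal regions. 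Your argument nowhere uses the hypothesis that the component of $\mathcal Z_{u_p^n}$ through $O$ has closure meeting $\partial B$; if it were correct it would therefore give $2n$ nodal regions in that last case as well, contradicting Lemma \ref{lem-14}.

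What is missing is the global input that the paper supplies. For the seams one uses Lemma \ref{lem-11}: if $u_p^n\le 0$ at a seam point $\bar x$, then $u_p^n<0$ on the whole open arc of radius $|\bar x|$ in $S_{\frac{2\pi}n}$, so the circle $|x|=|\bar x|$ blocks any connected subset of $\mathcal Z_{u_p^n}$ from joining $O$ to $\partial B$ --- this is exactly where the hypothesis enters. For the bisector one again uses the fact that the component $\mathcal Z$ through $O$ must meet every circle of radius $\rho\in(0,1)$ on its way to $\partial B$; together with the reflection symmetry and the one-zero-per-circle property this forces $\mathcal Z\cap\mathcal B=\{O\}$ and confines $\mathcal Z$ to a semi-sector, after which the negative region $D_1$ between $\mathcal Z$ and its mirror image, its $n$ rotates, and the $n$ rotates of the positive region containing the seam yield the count $2n$, with Lemma \ref{lem-10} excluding any further regions. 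If you replace your local tangency argument by these two global steps, the remainder of your write-up goes through.
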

\begin{proof}
Of course, under the assumptions $u_p^n$ cannot be radial. Moreover due to Lemma \ref{lem-11} 
$u_p^n>0$ on $\partial S_{\frac{2\pi}n}\cap B \setminus\{0\}$, otherwise $\mathcal Z_{u_p^n}$ would not possess a connected component that contains the origin and whose closure intersects $\partial B$. Then 
there exists at least one component of $S_{\frac{2\pi}n}
\setminus \mathcal Z_{u_p^n}$ 
in which $u_p^n<0$. 
We claim that the set $D_1:=\{x\in S_{\frac{2\pi}n}:u_p^n(x)<0\}$ is made of one connected component. Indeed, if it contained at least $2$ connected components this would contradict Lemma \ref{lem-10}, since there exists at least another connected component of $B\setminus \mathcal Z_{u_p^n}$
that contains the $x$-axis, in which $u_p^n$ is positive. \\
By assumptions and by the symmetry of $u_p^n$ there should be a connected component $\mathcal Z$ of  $\mathcal Z_{u_p^n}$ that contains the origin and whose closure intersects $\partial B$ and, since $u_p^n>0$ on $\partial S_{\frac{2\pi}n}\cap B \setminus\{0\}$, it should be contained in $S_{\frac {2\pi}n}$. Moreover $\mathcal Z\cap \mathcal B=O$ where $\mathcal B$ is, as before, the bisector of $S_{\frac {2\pi}n}$. Indeed, by contradiction, if $\mathcal Z\cap \mathcal B=\bar x\neq O$ then,
 by the symmetry of $u_p^n$, $\bar x$ should be a critical point for $u_p^n$ and this is not possible by Lemma \ref{lem11-bis}. So we can assume $\mathcal Z\subset S_{\frac \pi n}$ and by the symmetry of $u_p^n$ the symmetric of $\mathcal Z$ with respect $\mathcal B$, called $\mathcal Z'$, belongs to $\mathcal Z_{u_p^n}\cap S_{\frac {2\pi}n}\setminus \overline{ S_{\frac \pi n}}$.
 
Then $\bar {\mathcal Z}\cup \mathcal Z'$ shapes the boundary of $D_1$ inside $S_{\frac {2\pi}n}$ which is the unique nodal region of $u_p^n$ contained in $S_{\frac{2\pi}n}$ and $D_1$ separates the regions of $S_{\frac{2\pi}n}$ in which $u_p^n$ is positive.

We call  $D_1^n$ the subset of $B\setminus \mathcal Z_{u_p^n}$ which contains $D_1$ and is $n$-invariant. Since $D_1\subset S_{\frac{2\pi}n}$, then $D_1^n$ is made of $n$ connected components.\\
Next we let $D_2$ be the connected component of $B\setminus \mathcal Z_{u_p^n}$ that contains the set $\{(x,0): 0<x<1\}$ and we denote $D_2^n$ the the subset of $B\setminus \mathcal Z_{u_p^n}$ which contains $D_2$ and is $n$-invariant. 
Of course $\{(x,y)\in B: x>0,y>0, \frac yx=\frac {2\pi}n\}\subset R_{\frac{2\pi}n}(D_2)\subset D_2^n$. If this is not true we can find a contradiction with Lemma \ref{cor1}. Moreover, since $D_2^n=D_2\cup R_{\frac{2\pi}n}(D_2)\cup R_{\frac{4\pi}n}(D_2)\cup\dots\cup R_{\frac{2(n-1)\pi}n}(D_2)$ then $D_2^n$ is made of $n$ connected components. Furthermore, there cannot be other nodal regions since this would contradict Lemma \ref{lem-10}. This shows that the nodal regions are $2n$.
\end{proof}

\begin{corollary}
Let $u_p^n$ be a least energy $n$-invariant nodal solution to \eqref{H}. Assume $\overline{\mathcal Z_{u_p^n}}$ admits in $\overline{S_{\frac{2\pi}n}}$ a connected component $\mathcal C_p$ that contains the origin and intersects $\partial B$. Then $\overline {\mathcal C_p}$ intersect the bisector $\mathcal B$ only in the origin and eventually on $\partial B$.
\end{corollary}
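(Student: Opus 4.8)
The plan is to argue by contradiction: I assume that $\overline{\mathcal C_p}$ meets the open bisector $\mathcal B$ at some point $\bar x$ interior to $B$ with $\bar x\neq O$, and I derive a contradiction with the fact that $\mathcal C_p$ reaches $\partial B$. Since $\mathcal C_p$ contains $O$, the solution $u_p^n$ is nonradial, so by Theorem \ref{teo1} I may assume (up to a rotation) that it is symmetric with respect to $\mathcal B$ and strictly decreasing in the angular variable in $S_{\frac\pi n}$. The two facts I will keep using are: on every circle $\{|x|=r\}$ the restriction of $u_p^n$ to $\overline{S_{\frac{2\pi}n}}$ attains a strict minimum exactly on $\mathcal B$; and, since $O\in\mathcal Z_{u_p^n}$, the origin is a critical point of order $n$, so exactly two nodal arcs of $u_p^n$ enter $S_{\frac{2\pi}n}$ from $O$, symmetric with respect to $\mathcal B$, bounding near $O$ a region containing the bisector in which $u_p^n<0$ (the bisector being the angular minimum).

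Among all such intersection points I would pick $\bar x$ closest to the origin along $\mathcal B$, and set $g(r):=u_p^n(r,\tfrac\pi n)$ along the bisector. Then $g(|\bar x|)=0$, while $g<0$ on $(0,|\bar x|)$ by the choice of $\bar x$ and by the sign of $u_p^n$ near $O$. The decisive observation concerns the circle $\Gamma:=\{|x|=|\bar x|\}$: since $\mathcal B$ carries the strict angular minimum and $g(|\bar x|)=0$, one has $u_p^n\ge 0$ on $\Gamma\cap \overline{S_{\frac{2\pi}n}}$, with equality only at $\bar x$.

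From this I conclude that the negative region bounded by the two nodal arcs issuing from $O$ cannot cross $\Gamma$: it is contained in $\{|x|<|\bar x|\}$, and its two bounding arcs must meet, pinching, at the single point $\bar x$ of $\Gamma$ where $u_p^n$ vanishes. Consequently the connected nodal component through $O$, namely $\mathcal C_p$, is the boundary of this region and is entirely contained in $\{|x|\le|\bar x|\}$. As $|\bar x|<1$, this forces $\overline{\mathcal C_p}\cap\partial B=\emptyset$, contradicting the hypothesis that $\mathcal C_p$ intersects $\partial B$. Hence no interior $\bar x\neq O$ can lie on $\mathcal B\cap\overline{\mathcal C_p}$, and the only admissible intersections of $\overline{\mathcal C_p}$ with the bisector are the origin and the point where $\mathcal B$ reaches $\partial B$.

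The step I expect to be delicate is the confinement of the negative region to $\{|x|\le|\bar x|\}$ and the resulting pinching at $\bar x$: this is exactly where the strict angular monotonicity of Theorem \ref{teo1} is indispensable, since it is what guarantees $u_p^n>0$ on $\Gamma$ away from $\bar x$ and thereby prevents the nodal line from escaping to larger radii near the bisector. A minor point to verify is that $\bar x$ is a regular point of $u_p^n$, so that the arcs through it are genuine boundaries of the negative region and do not branch; this holds because the only critical point that $\mathcal Z_{u_p^n}$ can contain is the origin, and in any case a critical $\bar x$ on $\mathcal Z_{u_p^n}\cap S_{\frac{2\pi}n}$ would be ruled out directly by Lemma \ref{lem11-bis}.
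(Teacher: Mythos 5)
Your proof is correct in its essentials, but it takes a genuinely different route from the paper's. The paper disposes of this statement in one line inside the proof of Lemma \ref{lem-12}: a point $\bar x\in \mathcal Z_{u_p^n}\cap\mathcal B$ with $\bar x\neq O$ would, by the symmetry with respect to $\mathcal B$, be a critical point of $u_p^n$, contradicting Lemma \ref{lem11-bis}. That argument uses only that the angular derivative vanishes on $\mathcal B$ and that two distinct nodal arcs meeting at a point force a singular zero; it passes over the case of a single nodal arc crossing $\mathcal B$ perpendicularly at $\bar x$, where the gradient is purely radial and possibly nonzero, so that $\bar x$ is a regular point and Lemma \ref{lem11-bis} yields nothing. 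Your trapping argument handles exactly that residual case: at the first zero $\bar x$ of $g(r)=u_p^n(r,\tfrac\pi n)$ the gradient is radial and, by Lemma \ref{lem11-bis}, nonzero, hence $\partial_r u_p^n(\bar x)>0$; since the bisector carries the angular minimum, $u_p^n\ge g(r)>0$ on a whole annulus $|\bar x|<|x|<|\bar x|+\delta$, which is therefore free of zeros and disconnects $\mathcal C_p\ni O$ from $\partial B$ --- the desired contradiction with the hypothesis that $\mathcal C_p$ reaches the boundary. So your route is longer but actually closes a case that the paper's symmetry-implies-criticality step leaves implicit. Two places in your write-up should be tightened: the claim that $g<0$ on $(0,|\bar x|)$ requires observing that any path in $\mathcal C_p$ from $O$ towards $\partial B$ inside $\overline{S_{\frac{2\pi}n}}$ must cross the circle through the first zero of $g$ at a point where $u_p^n$ vanishes, and the only such point on that circle in the closed sector lies on the bisector, so that first zero already belongs to $\mathcal C_p$; and the ``pinching'' of the two arcs together with the identification of $\mathcal C_p$ with the boundary of the negative region are neither fully justified nor needed --- the inclusion $\mathcal C_p\subset\{|x|\le|\bar x|\}$, which is all you use, already follows from the regularity of $\bar x$ and the annulus observation above.
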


\noindent Now we consider the case in which there exists a sector $S$ of angle $\frac {2\pi}n$ that contains one component of $u_p^n$ but $\mathcal Z_{u_p^n}$ does not admit a connected component that contains the origin and whose closure intersects $\partial B$. The set $\mathcal Z_{u_p^n}$ can contain either the origin or intersect the boundary $\partial B$ or neither. 
That makes no difference in this context and we can prove:
\begin{lemma}\label{lem-14}
Let $u_p^n$ be a least energy $n$-invariant nodal solution to \eqref{H}. Assume that there exists a sector $S$ of angle $\frac {2\pi}n$ that contains one component of $B\setminus \mathcal Z_{u_p^n}$, but $\mathcal Z_{u_p^n}$ does not admit a connected component that contains the origin and whose closure intersects $\partial B$. Then $u_p^n$ has $n+1$ nodal regions in $B$.
\end{lemma}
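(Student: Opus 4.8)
The plan is to count the nodal regions by sorting the connected components of $B\setminus\mathcal Z_{u_p^n}$ into orbits under the rotation group $\mathcal G_n$ and to exploit the angular monotonicity of Theorem \ref{teo1} to fix their geometry. First I note that a region strictly contained in a sector cannot occur for a radial solution, so $u_p^n$ is nonradial and Theorem \ref{teo1} applies. Since each $R_{\frac{2\pi}n}$ preserves the values of $u_p^n$, every $\mathcal G_n$-orbit of components is single-signed; as $u_p^n$ changes sign there are at least two orbits, and the construction in the proof of Lemma \ref{lem-9} (producing from one component per orbit an $L^2$-orthogonal family $z=u_p^n\chi_{D^n}$ with $Q_{u_p^n}(z)<0$) shows via \eqref{eq:morse-due-n} that there are at most two. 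Hence there are exactly two orbits, one positive and one negative. Rotating so that $S=S_{\frac{2\pi}n}$ and $u_p^n$ is symmetric about $\mathcal B$ and strictly decreasing in the angular variable on $S_{\frac\pi n}$, on each circle $\{|x|=r\}\cap\overline{S_{\frac{2\pi}n}}$ the function $u_p^n$ is maximal on the two bounding rays and minimal on $\mathcal B$; the positive superlevel set on such an arc therefore either is the whole arc or meets both bounding rays, so a region strictly contained in a sector must be negative and centered on $\mathcal B$. Thus the region $D_1$ furnished by the hypothesis is negative, and since $D_1\subset S_{\frac{2\pi}n}$ strictly its orbit consists of the $n$ disjoint copies $D_1,R_{\frac{2\pi}n}D_1,\dots$; as this orbit is the whole negative orbit, $u_p^n$ has exactly $n$ negative regions.

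Next I would show that $u_p^n>0$ on $\partial S_{\frac{2\pi}n}\cap B\setminus\{O\}$. Were $u_p^n(\bar x)\le 0$ at a boundary-ray point $\bar x\neq O$, Lemma \ref{lem-11} would force $u_p^n<0$ on the whole arc $\{|x|=|\bar x|\}\cap S_{\frac{2\pi}n}$, hence by $\mathcal G_n$-invariance on the whole circle, so the negative component carrying that circle would be $\mathcal G_n$-invariant and therefore distinct from the $n$ sector-bound copies of $D_1$. This would create a second negative orbit, and together with the positive one a third $n$-invariant block, contradicting \eqref{eq:morse-due-n}. The borderline case $u_p^n(\bar x)=0$ is excluded because $\bar x$ would then be a critical point of $u_p^n$ lying on $\mathcal Z_{u_p^n}$, against Lemma \ref{lem11-bis}. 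For the same reason $u_p^n(O)\ge 0$: a negative value at the fixed point $O$ would put $O$ in a $\mathcal G_n$-invariant negative component, again a third block.

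It then remains to prove that the positive orbit is a single $\mathcal G_n$-invariant region $D_2$, which forces the total count $n+1$. Here the hypothesis that $\mathcal Z_{u_p^n}$ has no component joining $O$ to $\partial B$ is decisive. If $u_p^n(O)>0$, then $O$ together with the $n$ open bounding rays, on all of which $u_p^n>0$, lies in one component, so the positive region is connected; being the complement of the $\mathcal G_n$-invariant union of the negative islands, it is $\mathcal G_n$-invariant. If $u_p^n(O)=0$, then $O$ is a critical point and $\mathcal Z_{u_p^n}$ consists, away from $O$, of smooth arcs (Corollary \ref{cor2}); by the symmetry and strict monotonicity of $u_p^n$ exactly $2n$ such arcs issue from $O$, alternately bounding $n$ negative petals about the bisectors and $n$ positive petals about the rays. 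None of these arcs reaches $\partial B$, for that would be a component of $\mathcal Z_{u_p^n}$ running from $O$ to $\partial B$; hence the negative petals stay off $\partial B$ and the positive petals merge outside them into one connected region. In either case $D_2$ is connected and $\mathcal G_n$-invariant, and since the two orbits exhaust all nodal regions, $u_p^n$ has exactly $n+1$ nodal regions.

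I expect the genuinely delicate point to be the connectedness of the positive region in the last step: one must argue topologically that excising the $n$ negative islands from $B$ leaves a connected set, the only disconnecting mechanism being a negative region — equivalently a nodal arc — running from $O$ all the way to $\partial B$, which is exactly what the standing hypothesis forbids. Making this precise requires the smoothness of $\mathcal Z_{u_p^n}\setminus\{O\}$ from Corollary \ref{cor2}, the local description of the nodal set at $O$, and a careful treatment of the cases in which $\overline{D_1}$ meets $O$ or $\partial B$.
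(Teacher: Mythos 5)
Your overall strategy --- exactly two $\mathcal G_n$-orbits of nodal components, one per sign (this is the paper's Corollary \ref{cor1}), the $n$ disjoint rotated copies of the sector-confined region, and a single connected $n$-invariant complementary region --- is the same as the paper's, which runs the argument through Corollary \ref{cor1}, Lemma \ref{lem-11} and the Hopf lemma. The genuine flaw is the step where you claim to \emph{derive} that $D_1$ is negative: you ``rotate so that $S=S_{\frac{2\pi}n}$ \emph{and} $u_p^n$ is symmetric about $\mathcal B$'', but these are two independent normalizations that cannot in general be imposed simultaneously. The sector $S$ furnished by the hypothesis is arbitrary; if it is centered on one of the rays $\theta=\frac{2\pi k}n$ (the angular maxima of $u_p^n$), a \emph{positive} nodal region confined to $S$ is perfectly consistent with Theorem \ref{teo1} --- picture $n$ positive petals around the rays surrounded by one connected $n$-invariant negative region; this still gives $n+1$ regions, but with all signs reversed, and in that configuration $u_p^n$ \emph{is} negative at points of the bounding rays, so your subsequent deduction that $u_p^n>0$ on $\partial S_{\frac{2\pi}n}\cap B\setminus\{O\}$ (and the connectedness argument through $O$ and the rays) breaks down. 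As written your proof would ``exclude'' this legitimate configuration by a contradiction that is not there. The paper sidesteps the issue by normalizing rather than deriving: it assumes $u_p^n<0$ in $D_1$, replacing $u_p^n$ by $-u_p^n$ (which after a rotation by $\frac\pi n$ again satisfies the conclusion of Theorem \ref{teo1}) if necessary. With that single repair your argument goes through.

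Two smaller points. In excluding the borderline case $u_p^n(\bar x)=0$ at a ray point $\bar x\neq O$, criticality of $\bar x$ does not follow from the angular symmetry alone: it gives $\partial_\theta u_p^n(\bar x)=0$, but $\partial_r u_p^n(\bar x)=0$ only if $u_p^n\ge 0$ along the ray near $\bar x$; if instead $u_p^n<0$ somewhere on the ray you must fall back on your first case. Moreover Lemma \ref{lem11-bis} concerns critical points in the open sector, so for ray points you need the corollary asserting that the only critical point of $u_p^n$ on $\mathcal Z_{u_p^n}$ is the origin. Finally, the connectedness of the positive region when $u_p^n(O)=0$ is asserted via ``the positive petals merge outside''; the paper's version of this step is more economical (a neighbourhood of $O$, or else of $\partial B$, inside $S$ on which $u_p^n>0$ joins the two bounding rays, using that $O\in\partial D_1$ and $\partial D_1\cap\partial B\neq\emptyset$ cannot both occur under the standing hypothesis), whereas your arc-tracing sketch additionally needs that the nodal arcs issuing from $O$ neither terminate in the interior nor meet a second critical point, which is exactly where Corollary \ref{cor2} and the uniqueness of the critical point on the nodal set must be invoked.
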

\begin{proof}
We let $D_1$ be the connected component of 
$B\setminus \mathcal Z_{u_p^n}$ contained in $S$. We can always assume that $u_p^n<0$ in $D_1$ (otherwise we can consider $-u_p^n$ instead of $u_p^n$).
We call  $D_1^n$ the subset of $B\setminus \mathcal Z_{u_p^n}$ which contains $D_1$ and is $n$-invariant. Since $D_1$ is contained in a sector of amplitude $\frac{2\pi}n$, then $D_1^n$ is made of $n$ connected components $D_1\cup R_{\frac{2\pi}n}(D_1)\cup\dots\cup R_{(n-1)\frac{2\pi}n}(D_1)$.\\
We claim that 
$u_p^n>0$ in $D_2=S\setminus \overline D_1$.
Assume, by contradiction, that $\{x\in S: u_p^n(x)<0\}=D_1\cup G_1$ and let $G_2=\{x\in S: u_p^n(x)>0\}\neq \emptyset$. 
For $i=2,3$ let $G_i^n$ be the subset of $B\setminus \mathcal Z_{u_p^n}$ which contains $G_i$ and is $n$-invariant. We have that $D_1^n\cap G_i^n=\emptyset$ for each $i=2,3$ and this contradicts Corollary \ref{cor1}. Then 
$u_p^n$ has the same sign throughout $D_2$, namely $u_p^n>0$. \\
Next we claim that $\bar D_1\subset S\cup \{O\}\cup \{\partial B\}$
and, in particular, either $\{O\}\in \partial D_1$, or $\partial D_1\cap \partial B\neq \emptyset$ or none of the two holds. In all these cases $D_2$ is connected, $u_p^n>0$ in $\partial S\cap (B\setminus\{O\})$ so that 
$D_2^n$, the subset of $B\setminus \mathcal Z_{u_p^n}$ which contains $D_2$ and is $n$-invariant, is connected concluding the proof.\\
Assume, by contradiction, that  $\bar D_1\cap (\partial S\cap B\setminus\{O\})\neq \emptyset$. We call $t_1$ and $t_2$ the two lines which 
border the sector $S$. By symmetry reasons $\bar D_1$ intersects both the lines $t_1$ and $t_2$ in two points $x_1,x_2$ such that $|x_1|=|x_2|$. Moreover $u_p^n(x)<0$ for every $x\in S$ such that $|x|=|x_1|$. Since $D_1$ is contained in $S$, it is not possibile that $\partial D_1$ contains a part of $t_1$ or $t_2$, because this would contradict the Hopf boundary Lemma. Then there should be a part of a nodal region $G_1$ in which $u_p^n>0$ contained in $B(0,|x_1|)$ and another part $G_2$ contained in $B\setminus \bar B(0,|x_1|)$ which are separated by $D_1$. Letting as before $G_i^n$ be the subset of $B\setminus \mathcal Z_{u_p^n}$ which contains $G_i$ and is $n$-invariant, for $i=2,3$, we have that $D_1^n\cap G_i^n=\emptyset$ for $i=2,3$ and this contradicts Corollary \ref{cor1}.
Then $u_p^n>0$ on $t_1,t_2$. \\
Finally if $\{O\}\in \partial D_1$ then it is obvious, by our assumptions, that $\partial D_1\cap \partial B=\emptyset$, since $u_p^n=0$ on $\partial D_1$. It is now easy to see that $D_2$ is connected. Indeed if $\{O\}\notin \partial D_1$ there exists a neighborhood of $O$ in $S$ in which $u_p^n>0$ that connects $t_1$ and $t_2$ and the same is true whenever $\partial D_1\cap \partial B=\emptyset$. This shows that also $D_2^n$ is connected and proves that the nodal regions of $u_p^n$ in this case are $n+1$.
\end{proof}

\noindent We end this part with a final Lemma which considers the case when the closure of the nodal set of $u_p^n $ intersects the boundary $\partial B$.
\begin{lemma}\label{lem-15}
Let $u_p^n$ be a least energy $n$-invariant nodal solution to \eqref{H}. Assume that the closure of $\mathcal Z_{u_p^n}$ intersects $\partial B$. Then $u_p^n$ admits at least $n+1$ nodal regions in $B$.
\end{lemma}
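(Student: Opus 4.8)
The plan is to show that, under the stated assumption, the hypotheses of either Lemma \ref{lem-12} or Lemma \ref{lem-14} are met, both of which yield at least $n+1$ nodal regions. First I would discard the radial case: a radial $u_p^n$ has $\mathcal{Z}_{u_p^n}$ equal to a circle of radius $r<1$, so $\overline{\mathcal{Z}_{u_p^n}}$ cannot meet $\partial B$; hence $u_p^n$ is nonradial and Theorem \ref{teo1} applies. I would then upgrade the symmetry: being invariant under $R_{\frac{2\pi}n}$ and, up to a rotation, symmetric with respect to $\mathcal B$, the solution $u_p^n$ is invariant under the full dihedral group these two generate, so that each line through $O$ making an angle equal to a multiple of $\frac\pi n$ with the chosen axis is an axis of symmetry. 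By the strict angular monotonicity in $S_{\frac\pi n}$ given by Theorem \ref{teo1}, on every circle $\{|x|=\rho\}$ the restriction of $u_p^n$ is strictly monotone in the angular variable between two consecutive such axes; consequently it attains its angular extrema exactly on these axes, which alternate between maximizing and minimizing ones.

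Next I would use that the contact of $\overline{\mathcal{Z}_{u_p^n}}$ with $\partial B$ forces an angular change of sign near the boundary: were $u_p^n$ of constant sign on a whole annulus $\{1-\delta<|x|<1\}$, its nodal set could not approach $\partial B$. By the previous step this change of sign, on the circles near $\partial B$, places $u_p^n<0$ around the minimizing axes and $u_p^n>0$ around the maximizing ones (or the reverse), so there is a nodal region touching $\partial B$ on which $u_p^n$ has constant sign, say $u_p^n<0$. The argument then splits according to the value $u_p^n(O)$. If $u_p^n(O)=0$, then $O\in\mathcal{Z}_{u_p^n}$ and by Corollary \ref{cor3} it is a critical point of order $n$, so $\mathcal{Z}_{u_p^n}$ is made near $O$ of $2n$ arcs; these must connect to the boundary contacts into a single component of $\mathcal{Z}_{u_p^n}$ joining $O$ to $\partial B$, for otherwise one would produce more than two $\mathcal G_n$-invariant nodal sets, against Corollary \ref{cor1}. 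Lemma \ref{lem-12} then gives $2n\ge n+1$ regions. If instead $u_p^n(O)\neq0$, after possibly replacing $u_p^n$ by $-u_p^n$ I may assume $u_p^n(O)>0$: the nodal region containing $O$ is then $\mathcal G_n$-invariant and no component of $\mathcal{Z}_{u_p^n}$ joins $O$ to $\partial B$. Choosing the sector $S=R_\varphi(S_{\frac{2\pi}n})$ centred on the minimizing axis around which the negative boundary region sits, the two rays bounding $S$ lie on maximizing axes where $u_p^n>0$, so that negative region is strictly contained in $S$; the hypotheses of Lemma \ref{lem-14} hold and we obtain exactly $n+1$ regions.

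The step I expect to be delicate is the confinement of the negative boundary-touching region to a single sector in the case $u_p^n(O)>0$, that is, excluding that it wraps around the origin into an annulus separating the positive core containing $O$ from the positive lobes near $\partial B$. I plan to rule this out through the Morse-index bound \eqref{eq:morse-due-n}: such an annular configuration would furnish three pairwise $L^2$-orthogonal $\mathcal G_n$-invariant functions, obtained by restricting $u_p^n$ to the positive core, to the negative annulus, and to the union of the outer positive lobes, each of which makes $Q_{u_p^n}$ negative exactly as in the proof of Lemma \ref{lem11-bis}; this contradicts $m_n(u_p^n)=2$, equivalently Corollary \ref{cor1}. Once this is excluded the negative region is genuinely angularly confined and Lemma \ref{lem-14} applies, so that in every case $u_p^n$ has at least $n+1$ nodal regions.
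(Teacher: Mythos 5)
Your argument is correct in substance but follows a genuinely different route from the paper. The paper starts from a contact point $\bar x\in\overline{\mathcal Z_{u_p^n}}\cap\partial B$, invokes the Hartman--Wintner local structure there to produce a nodal arc in $S_{\frac{\pi}n}$, and then traces where that arc can go (stay in the half-sector, hit $\mathcal B$, or hit the $x$-axis), using the reflection symmetry to close it up into the boundary of a nodal region confined to a sector of angle $\frac{2\pi}n$, after which Lemma \ref{lem-12} or Lemma \ref{lem-14} applies. You instead argue globally: the angular monotonicity on circles near $\partial B$ forces both signs there, and then you split on whether $u_p^n(O)=0$, so that the dichotomy maps directly onto the hypotheses of Lemma \ref{lem-12} (component of $\mathcal Z_{u_p^n}$ from $O$ to $\partial B$) versus Lemma \ref{lem-14} (a nodal region trapped in a sector), with the bad configurations killed by Corollary \ref{cor1}. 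Your route makes the case structure cleaner and more clearly tied to the Morse-index count, at the price of two extra verifications that you should make explicit. First, in the case $u_p^n(O)=0$ you need that an arc of $\mathcal Z_{u_p^n}$ issuing from $O$ either returns to $O$ or accumulates on $\partial B$; this uses that the nodal set has no interior endpoints and that, by Lemma \ref{lem11-bis} and Corollary \ref{cor3}, $O$ is its only possible critical point, and only then does the ``all arcs loop back'' alternative yield the three pairwise disjoint invariant nodal sets contradicting Corollary \ref{cor1}. Second, in the case $u_p^n(O)\neq 0$ the assertion that the bounding rays of $S$ carry $u_p^n>0$ is only known near $\partial B$; the missing link is that if the negative boundary region meets a maximizing axis at some radius $\rho$, then by the angular monotonicity and the $\mathcal G_n$-invariance $u_p^n<0$ on the whole circle $\{|x|=\rho\}$, so the region contains a separating circle --- this is exactly why the annular configuration you exclude is the \emph{only} alternative to confinement in a sector. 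With those two links spelled out your proof is complete and of comparable rigor to the paper's.
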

\begin{proof}
If $\overline{\mathcal Z_{u_p^n}}\cap \partial B\neq \emptyset$ we can assume there exists at least a point $\bar x\in \overline{\mathcal Z_{u_p^n}}\cap \partial S_{\frac {2\pi}n}\cap \partial B$ such that $\frac {\bar y}{\bar x}\in (0,\frac {2\pi}n)$. If $\bar x\in \mathcal B$, by symmetry reasons and the Hopf Lemma, $\mathcal Z_{u_p^n}$ cannot be along $\mathcal B$ and then $\{(u_p^n)^{-1}(0)\}\cap B(\bar x, r)\cap B$ is made of at least $2$ simple arcs, one above $\mathcal B$ and one below it. Let us consider the arc below $\mathcal B$ which is contained in $S_{\frac{\pi}n}$ and is the boundary of a nodal region for $u_p^n$. Since $u_p^n=0$ on $\partial B$, either this arc is contained in $S_{\frac{\pi}n}$ and then its closure intersects $\partial B$, or it intersects $\mathcal B$ 
or it intersect the $x$ axis for $x\neq O$. In the first two cases, by the symmetry of $u_p^n$ with respect to $\mathcal B$, this arc shapes the boundary of a nodal region of $u_p^n$ which is contained in the sector $S_{\frac{2\pi}n}$ and then we can apply either Lemma \ref{lem-12} or Lemma \ref{lem-14} depending on the fact if the origin belongs to this arc or not. In both cases $u_p^n$ admits at least $n+1$ nodal regions and we are done. 
In the case when the arc intersects the $x$ axis instead, by symmetry of $u_p^n$, we have that a nodal component of $u_p^n$ is contained in $\overline{S_{\frac{\pi}n}}\cup R_{-\frac{\pi} n}(S_{\frac{\pi}n})$ which is a sector of angle $\frac {2\pi}n$ and again we can apply Lemma \ref{lem-14} getting the result.\\
Finally, if the closure of $\mathcal Z_{u_p^n}$ intersects $\partial B$ in a point which does not lie on the bisector, then there is at least one component of $\mathcal Z_{u_p^n}$ in $S_{\frac{\pi}n}$ and we can repeat exactly the previous argument to get the thesis.
\end{proof}

\noindent We can now prove Theorem \ref{teo3} and Corollaries \ref{cor-fin} and \ref{cor-fin2}.
\begin{proof}[Proof of Theorem \ref{teo3}]
Let $u_p^n$ a least energy $n$-invariant solution. Either there exists a sector $S$ of angle $\frac {2\pi}n$ that contains a nodal region of $u_p^n$ or not. In the second case all the nodal regions of $u_p^n$ are $n$-invariant and connected. This implies that $u_p^n$ admits only two nodal regions by Corollary \ref{cor1} and we are in case $3)$. Of course the nodal line cannot touch the boundary of $B$ due to Lemma \ref{lem-15}.\\
Else, if there exists a sector $S$ of angle $\frac {2\pi}n$ that contains a nodal region of $u_p^n$, then either $\mathcal Z_{u_p^n}$ admits a connected component that contains the origin and whose closure intersects $\partial B$ or not. In the first case we are in case $1)$ and by Lemma \ref{lem-12} $u_p^n$ has $2n$ nodal regions. By Proposition \ref{prop-9} this is possible if and only if $2n\le \left[\frac {2+\a}2\gamma\right]$ which yields the thesis in case $1)$. Else $\mathcal Z_{u_p^n}$ does not admit a connected component that contains the origin and whose closure intersects $\partial B$, and by Lemma \ref{lem-14} $u_p^n$ has $n+1$ nodal regions and we are in case $2)$.  By Proposition \ref{prop-9} this is possible if and only if $n+1\le\left[\frac {2+\a}2\gamma\right]$ concluding the proof.
\end{proof}

\begin{proof}[Proof of Corollary \ref{cor-fin}]
By Theorem \ref{teo2} we know that $u_p^1, u_p^2,\dots, u_p^{\lceil \frac {2+\a}2 \kappa-1\rceil}$ are distinct nodal nonradial solutions to \eqref{H}. By Theorem \ref{teo3} we know that they are quasiradial when $n>\max\{\left[\frac {2+\a}2\gamma-1\right], \left[\frac {2+\a}4\gamma\right] \}=\left[\frac {2+\a}2\gamma-1\right]$. The proof then follows observing that, by the definition of $\gamma$ and $\kappa$ and the properties of the ceiling function, for every value of $\a$, it holds $\left[\frac {2+\a}2\gamma-1\right]<\lceil \frac {2+\a}2 \kappa-1\rceil$.
Moreover the value $\lceil \frac {2+\a}2 \kappa-1\rceil-\left[\frac {2+\a}2\gamma-1\right]$ is increasing in $\a$ and, since for $\a=0$ $\lceil \kappa-1\rceil-\left[\gamma-1\right]=2$, we have at least two quasiradial solutions for every value of $\a$. 
The regularity of the nodal set follows by Corollary \ref{cor2}, since $O\notin \mathcal Z_{u_p^n}$.
\end{proof}

\begin{proof}[Proof of Corollary \ref{cor-fin2}]
The thesis follows since
\[\lceil \frac {2+\a}2 \kappa-1\rceil-\left[\frac {2+\a}2\gamma-1\right]\geq \frac{2+\a}2(\kappa-\gamma).\]
\end{proof}


\begin{thebibliography}{9999}

\bibitem[AP]{AP}{\sc A. Aftalion, F. Pacella},  Qualitative properties of nodal solutions of semilinear elliptic equations in radially symmetric domains, {\em C. R. Acad. Sci. Paris, Ser. I} {\bf 339}, (2004), 339-344.

\bibitem[AG1]{AG-19-part2} {\sc A.L. Amadori, F. Gladiali}, On a singular eigenvalue problem and its applications in computing the Morse index of solutions to semilinear PDE's, part II, in press Nonlinearity (2019), arXiv:1906.00368

\bibitem[AG2]{AGN=2} {\sc A.L. Amadori, F. Gladiali}, The H\'enon problem with large exponent in the disc, in press Journal of Differential Equations (2019)

\bibitem[CCN]{CCN} {\sc A. Castro, J. Cossio, J.M. Neuberger}, { A sign-changing solution for a superlinear Dirichlet problem, {\em Rocky Mountain J.Math.}, {\bf 27} (1997), 1041-1053.}

\bibitem[BWe]{BW}{\sc T. Bartsch, T. Weth}, A note on additional properties of sign changing solutions to superlinear elliptic equations,
  {\em Topological Methods in Nonlinear Analysis} {\bf 22} (2003), 1-14.
  
  \bibitem[BWW]{BWW}
{\sc T. Bartsch, T. Weth, M. Willem,} Partial symmetry of least energy nodal solutions to
some variational problems, {\em J. Anal. Math. }{\bf 96} (2005), 1-18.

\bibitem[DIP]{DeIP13} {\sc F. De Marchis, I. Ianni, F. Pacella}, Sign changing solutions of Lane Emden problems with interior nodal line and semilinear heat equations, Journal of Differential Equations {\bf 254} (2013) 3596-3614.
	
\bibitem[DP1]{ederson-filomena-2} {\sc E. M. dos Santos, F.  Pacella}, H\'enon type equations and concentration on spheres, Indiana Univ. Math. J. {\bf 65} (2016) 273-306. doi: 10.1512/iumj.2016.65.5751
	
\bibitem[DP2] {DSP}{\sc E. M. dos Santos, F.  Pacella},
	{Morse index of radial nodal solutions of Hénon type equations in dimension two}, Commun. Contemp. Math. {\bf 19}, (2017), doi: {10.1142/S0219199716500425}
		
\bibitem[G]{Gladiali-19} {\sc F. Gladiali}, A monotonicity result under symmetry and Morse index constraints in the plane, (2019) preprint arXiv:1904.03905 


\bibitem[GGN]{GGN}  {\sc F. Gladiali, M. Grossi, S.L.N. Neves}, Nonradial solutions for the H\'enon equation in $\R^N$, {\em Advances in Mathematics}, {\bf 249} (2013), 1-36, doi:10.1016/j.aim.2013.07.022.

\bibitem[GGN2]{GGN2}{\sc F. Gladiali, M. Grossi, and S. L. N. Neves}, Symmetry breaking and Morse index of solutions of nonlinear elliptic problems in the plane {\em Commun. Contemp. Math.} {\bf 18} (2016), 
 
 \bibitem[GI]{GI}{\sc F. Gladiali, I. Ianni}, Quasiradial nodal solutions for the Lane-Emden problem in the ball, in press Nodea (2019) arXiv:1709.03315
    
\bibitem[GGP]{GGP14} {\sc M.~Grossi, C.~Grumiau, F.~Pacella}, Lane Emden problems with large exponents and singular Liouville equations, {\em   Journal des Math\'ematiques Pures et Appliqu\'ees} {\bf 101/6} (2014),  735-754, DOI: 10.1016/j.matpur.2013.06.011

\bibitem[HW]{HW}{\sc  P. Hartman, A. Wintner}, On the local behavior of solutions of non-parabolic partial dierential equations, {\em Amer. J. Math.}  {\bf 75}  (1953), 449476. 

\bibitem[HHT]{HHT} {\sc B. Helffer, T. Hoffmann-Ostenhof, S. Terracini}, 
Nodal domains and spectral minimal partitions, {\em Ann. Inst. H. Poincar\'e  Anal. Non Lin\'eaire} {\bf  26} (2009), 101-138. 

\bibitem[H]{H} {\sc M. H\'enon}, Numerical experiments on the stability oh spherical stellar systems.  Astronom. Astrophys. {\bf 24}, (1973), 229-238.

\bibitem[PW]{PW} {\sc F. Pacella, T. Weth,} Symmetry of solutions to semilinear elliptic equations via Morse index, {\em Proc. American Math. Soc.} {\bf 135} (2007), 1753-1762.

\bibitem[P]{Palais} {\sc R.S. Palais}, { The Principle of Symmetric Criticality, {\em Commun. Math. Phys} {\bf 69}, (1979),19-30.}

\bibitem[RW]{RW}{\sc X. Ren, J. Wei}, On a two-dimensional elliptic problem with large exponent in nonlinearity, {\em Trans. Amer. Math. Soc.} {\bf 343} (1994), 749-763.



\end{thebibliography}
\end{document}